\theoremstyle{plain}
\theoremstyle{change}
\newtheorem{thm}{Theorem}[section]
\newtheorem{prop}[thm]{Proposition}
\theoremstyle{remark}
\theoremstyle{definition}
\newtheorem{defn}[thm]{Definition}
\numberwithin{equation}{section}
\renewcommand{\labelenumi}{(\theenumi)}
\def\Vec#1{\mbox{\boldmath $#1$}}
\title{On self-dual simple types of $p$-adic classical groups}
\author{Kazutoshi Kariyama and Michitaka Miyauchi}
\begin{document}
\maketitle

\begin{abstract}
Let $G$ be a classical group over a non-Archimedean local field of odd residual characteristic. Using recent work of S. Stevens, we define a certain kind of semisimple stratum, called {\it good}, and show that it provides a simple type in $G$ which is an analogue of the simple type for $GL(N,F)$ defined by Bushnell and Kutzko. Furthermore, we define a {\it self-dual} simple type in $G$.
\end{abstract}

\vspace{2mm}
{\footnotesize
{\it Mathematical Subject Classification (2000): 22E50}

 Keywords: $p$-adic classical group, semisimple stratum, $G$-cover, simple type}
 
\vspace{2mm}

\begin{center}
I{\scriptsize NTRODUCTION}
\end{center}

In order to classify the smooth representations of the general linear group $G = GL(N,F)$ of a non-Archimedean local field $F$, a family of representations of certain compact open subgroups of $G$, which are called {\it simple types}, were constructed by Busnell and Kutzko \cite{BK1, BK3}. 
Each simple type is associated with the inertial class $[M,\sigma]_G$ of a supercuspidal representation $\sigma$ of a Levi subgroup $M$ in such a way that $M \simeq GL(m,F) \times GL(m,F) \times \cdots \times GL(m,F)$ and $\sigma \simeq \sigma_0 \times \sigma_0 \times \cdots \times \sigma_0$ for an irreducible supercuspidal representation $\sigma_0$ of $GL(m,F)$ with $N = md$.

For a symplectic group $Sp(2N,F)$ over a non-Archimedean local field $F$ of odd residual characteristic, a type is constructed in Blondel \cite{Bl1}. It is associated with the inertial class $[M,\pi_0 \otimes \pi_0]$, which consists of a maximal Levi subgroup $M \simeq GL(N,F) \times GL(N,F)$ and the tensor product $\pi_0 \otimes \pi_0$ of a self-contragradient irreducible supercuspidal representation $\pi_0$ of $GL(N,F)$. 
These results are generalized to a maximal Levi subgroup of the other classical groups in Goldberg, Kutzko and Stevens \cite{GKS}, and to a Levi subgroup, which is not always maximal, of $Sp(2N,F)$ in Blondel \cite{Bl2}.  By the methods of Bushnell and Kutzko \cite{BK1}, in Kariyama \cite{Ka}, they can also be generalized to a Levi subgroup of an unramified unitary group over a non-Archimedean local field $F_0$ of odd residual characteristic. Furthermore, in \cite{St2}, many types in the general classical groups over $F_0$ are exhibited.

The purpose of this article is to define a {\it good} skew semisimple stratum and construct a {\it simple type} attached to this stratum for the general classical groups $G$ over $F_0$ by using the results of \cite{St2}. This simple type plays the same role as that for $GL(N,F)$. Furthermore, following a result in \cite{Ka}, a {\it self-dual} simple type in $G$ can be defined. This generalizes those types constructed in \cite{Bl1, Bl2, GKS, Ka}.

Let $(V,h)$ be the non-degenerate Hermitian form that defines the group $G$. We take a skew semisimple stratum $[\Lambda,n,0,\beta]$, which consists of a lattice sequence $\Lambda$ in $V$, integers $n > 0$, and a semisimple skew element $\beta = \sum_{i=1}^{\ell+1}\beta_i$ in $\mathrm{End}_F(V)$. The $F$-algebra $F[\beta]$ generated by $\beta$ decomposes $V$ into a direct sum of simple modules $V = \bigoplus_{i=1}^{\ell+1}V^i$. We say that $[\Lambda,n,0,\beta]$ is {\it good}, if $V$ has another decomposition $V = \bigoplus_{j=-m}^mW^{(j)}$ which is {\it self-dual}, {\it exactly subordinate} to $[\Lambda,n,0,\beta]$ in the sense of \cite{St2} and has the property that for $j \ne 0$, $W^{(j)}$ is contained in $V^{\ell+1}$ and has constant $F$-dimension (see Definition 2.1).

The latter decomposition of $V$ yields a parabolic $P = MU$ of $G$ with $M \simeq G^{(0)} \times GL(N/m,F)^m$, for some positive integers $m, N$ with $m \vert N$, where $G^{(0)}$ is a subgroup of the same type as $G$. From the lattice $\Lambda$, we obtain compact open subgroups $J^1_P \subset J_P$ of $G$ such that $J_P/J_P^1 \simeq \overline{G}^{(0)} \times GL(f,k_{E'})^m$, for some positive integer $f$ and a finite field $k_{E'}$, and a certain irreducible representation $\kappa_P$ of $J_P$, as in \cite{St2}.

For an irreducible representation $\tau$ of $J_P$ that is trivial on $J_P^1$, we say that the representation $\lambda_P = \kappa_P \otimes \tau$ of $J_P$ is a {\it simple type} if $\tau$ induces a representation of $J_P/J_P^1$ that contains a certain irreducible cuspidal representation of the identity component of $J_P/J_P^1$ as a finite reductive group (Definition 4.5). If $(J_P,\lambda_P)$ is a simple type, 
there exist irreducible supercuspidal representations $\pi_{\mathrm{cusp}}, \widetilde{\pi}^{(1)}, \cdots, \widetilde{\pi}^{(m)}$ of $G^{(0)},\ GL(N/m,F)$, respectively, such that
$(J_P,\lambda_P)$ is a $[M,\pi_M]_G$-type in $G$ in the sense of Bushnell-Kutzko \cite{BK2}, where $\pi_M = \pi_{\rm{cusp}} \otimes \bigotimes_{j=1}^m \widetilde{\pi}^{(j)}$ (Theorem 6.3).

There exist certain Weyl group elements $s_j$, for $1 \le j \le m$, and the conjugation on $M$ by $s_j$ induces an involution $\sigma_j$ on the $j$-th factor $GL(N/m,F)$ of $M$. It leaves the $j$-th factor $GL(f,k_{E'})$ of $J_P/J_P^1$ stable. A simple type $(J_P,\lambda_P)$, with $\lambda_P = \kappa_P \otimes \tau$, is called {\it self-dual} if the $j$-th factor $\overline{\widetilde{\tau}}^{(j)}$ on $GL(f,k_{E'})$ of the representation $\tau$ satisfies $\overline{\widetilde{\tau}}^{(j)} \circ \sigma_j \simeq \overline{\widetilde{\tau}}^{(j)}$, for $1 \le j \le m$ (this is equivalent to Definition 5.2). 
We show that if $\pi$ is an irreducible smooth representation of $G$ that contains a self-dual simple type $\lambda_P$, then there exists an irreducible self-dual supercuspidal representation $\rho$ of $GL(N/m,F)$ and a representation $\pi_{\rm{cusp}}$ of $G^{(0)}$ as above such that $\pi$ is equivalent to a $G$-subquotient of the parabolically induced representation $\mathrm{Ind}_P^G (\pi_{\rm{cusp}} \otimes \rho \nu^{x_1} \otimes \cdots \otimes \rho \nu^{x_m})$, where $\nu = \vert \det \vert_F$ on $GL(N/m,F)$ and $x_1,\cdots,x_m \in \mathbb{C}$, (Theorem 6.2).

The remainder of this paper is structured as follows: In Section 1, we introduce notation and provide some required definitions. In Section 2, we define a good skew semisimple stratum. 
In Section 3, we recall the notion of a $\beta$-extension, which is defined in \cite{St2}. We define a simple type $(J_P,\lambda_P)$ and a self-dual simple type in Sections 4 and 5. 
In Section 6, we prove that a simple type $(J_P,\lambda_P)$ is a type in the sense of \cite{BK2}. In fact it is a $G$-cover, as explained above.

 {\it Notation}: We denote by $\mathbb{N}$, $\mathbb{Z}$, and $\mathbb{C}$ the set of natural numbers, the ring of rational integers, and the field of complex numbers, respectively. For a ring $R$, we denote the multiplicative group of $R$ by $R^\times$.

\section{Preliminaries}     

We recall the notation used in Bushnell-Kutzko \cite{BK1, BK3}, Stevens \cite{St1, St2}.

Let $F$ be a non-Archimedean local field with Galois involution $^-$, and with fixed field $F_0 = \{x \in F \vert\ \overline{x} = x\}$. We allow $F_0$ to equal $F$. Let $\mathfrak{o}_F$ be the ring of integers of $F$, $\mathfrak{p}_F$ the maximal ideal of $\mathfrak{o}_F$, $\varpi_F$ a uniformizer of $F$, and $k_F = \mathfrak{o}_F/\mathfrak{p}_F$ the residue class field. We will assume throughout this study that the residual characteristic $p$ of $F$ is not equal to $2$.

Let $V$ be a finite-dimensional vector space over $F$ equipped with a non-degenerate $\varepsilon$-hermitian form $h$, where $\varepsilon \in \{+1,-1\}$. Put $A = \mathrm{End}_F(V)$, and let $a \mapsto \overline{a}$ be the (anti-)involution on $A$ defined by the form $h$:
\begin{equation*}
h(av,w) = h(v,\overline{a}w)\ (a \in A,\ v, w \in V).
\end{equation*}
Put $\widetilde{G} = A^\times = \mathrm{Aut}_F(V)$, and define an automorphism $\sigma$ of order 2 on $\widetilde{G}$ by $\sigma(g) = \overline{g}^{-1}\ (g \in \widetilde{G})$.
Put $\Sigma = \{1,\sigma\}$, where $1$ denotes the identity on $A$, and set
\begin{equation*}
G^+ = \widetilde{G}^\Sigma = \{g \in \widetilde{G} \vert\ \sigma(g) = g\}.
\end{equation*}
Denote by $G$ the identity component of $G^+$. Then $G$ is either a unitary group, a symplectic group, or a special orthogonal group over $F_0$. For a subgroup $\widetilde{H}$ of $\widetilde{G}$ with $\sigma(\widetilde{H}) = \widetilde{H}$, write $H^+ = \widetilde{H} \cap G^+,\ H = \widetilde{H} \cap G.$

An $\mathfrak{o}_F$-{\it lattice sequence} in $V$ is a function $\Lambda: \mathbb{Z} \to \{\mathfrak{o}_F$-lattices in $V \}$ that satisfies
\begin{enumerate}
  \item $n \ge m$ implies $\Lambda(n) \subset \Lambda(m)$, \
  \item there exists a positive integer $e = e(\Lambda \vert \mathfrak{o}_F)$ (the $\mathfrak{o}_F$-{\it period}) such that $\Lambda(n+e) = \mathfrak{p}_F\Lambda(n)$,\hspace{2mm} $(n \in \mathbb{Z})$.
\end{enumerate}

We say that an $\mathfrak{o}_F$-lattice sequence $\Lambda$ is {\it strict}, if $\Lambda(n) \supsetneq \Lambda(n+1)$\ $(n \in \mathbb{Z})$.

For an $\mathfrak{o}_F$-lattice $L$ in $V$, we define the {\it dual lattice} $L^\#$ by $L^\# = \{v \in V \vert\ h(v,L) \subset \mathfrak{p}_F\}$.
An $\mathfrak{o}_F$-lattice sequence $\Lambda$ in $V$ is called {\it self-dual}, if there exists an integer $d$ such that $\Lambda(k)^\# = \Lambda(d-k)\ (k \in \mathbb{Z})$.

From an $\mathfrak{o}_F$-lattice sequence $\Lambda$ in $V$, we obtain a filtration on $A$ by
\begin{equation*}
\mathfrak{a}_n = \mathfrak{a}_n(\Lambda) = \{x \in A \vert\ x\Lambda(k) \subset \Lambda(k+n)\ (k \in \mathbb{Z})\}\ (n \in \mathbb{Z}).
\end{equation*}
In particular, $\mathfrak{a}_0$ is a hereditary $\mathfrak{o}_F$-order in $A$ and $\mathfrak{a}_1$ is its Jacobson radical. This filtration defines a valuation $\nu_\Lambda$ on $A$ by $\nu_{\Lambda}(x) = \sup \{n \in \mathbb{Z} \vert\ x \in \mathfrak{a}_n\},\ (x \in A)$, with $\nu_\Lambda(0) = + \infty$.

From an $\mathfrak{o}_F$-lattice sequence $\Lambda$ in $V$, we obtain open compact subgroups of $\widetilde{G}$ by
\begin{center}
$\widetilde{P} = \widetilde{P}(\Lambda) = \mathfrak{a}_0(\Lambda)^\times,$\\
$\widetilde{P}_n = \widetilde{P}_n(\Lambda) = 1 + \mathfrak{a}_n(\Lambda)\ (n > 0).$
\end{center}
Then the $\widetilde{P}_n$ $(n > 0)$ are normal subgroups of $\widetilde{P}$ and form a filtration of $\widetilde{G}$. If $\Lambda$ is self-dual, then we obtain open compact subgroups of $G^+$ and $G$ from these groups by
\begin{center}
$P^+ = P^+(\Lambda) = \widetilde{P}(\Lambda) \cap G^+,\ P = P(\Lambda) = \widetilde{P}(\Lambda) \cap G$,\\
$P_n = P_n(\Lambda) = \widetilde{P}_n(\Lambda) \cap G\ (n > 0)$.
\end{center}
The quotient $\mathcal{G} = P/P_1$ is the group of $k_{F_0}$-rational points of a reductive algebraic group defined over $k_{F_0}$, where $k_{F_0}$ denotes the residue class field of $F_0$. We note that it is not always connected. 
We denote by $P^\mathrm{o} = P^\mathrm{o}(\Lambda)$ the inverse image of the identity component $\mathcal{G}^\mathrm{o}$ of $\mathcal{G} = P/P_1$ in $P = P(\Lambda)$. Then we have $\mathcal{G}^\mathrm{o} = P^\mathrm{o}/P_1$.

A {\it stratum} in $A$ is a 4-tuple $[\Lambda,n,r,b]$, where $\Lambda$ is an $\mathfrak{o}_F$-lattice sequence in $V$, $n \in \mathbb{Z}$, $r \in \mathbb{Z}$ with $n \ge r \ge 0$, and $b \in \mathfrak{a}_{-n}(\Lambda)$. A stratum $[\Lambda,n,r,\beta]$ is called {\it simple}, if it satisfies the following conditions:
\begin{enumerate}
  \item the algebra $E = F[\beta]$ is a field;\
  \item $\Lambda$ is an $\mathfrak{o}_E$-lattice sequence (which we denote by $\Lambda_{\mathfrak{o}_E}$);\
  \item $\nu_\Lambda(\beta) = -n$;\
  \item $k_0(\beta,\Lambda) < -r$,
\end{enumerate}
where $k_0(\beta,\Lambda)$ is the integer defined in \cite[\S 5]{St1} (cf. \cite[(1.5)]{BK1}).

A stratum $[\Lambda,n,r,b]$ is called {\it null}, if $n = r$ and $b = 0$.

Let $[\Lambda,n,r,\beta]$ be a stratum in $A$, and $V = \bigoplus_{i=1}^{\ell} V^i$ a direct sum decomposition of $V$ into $F$-subspaces. We say that the $F$-decomposition $V = \bigoplus_{i=1}^{\ell} V^i$ is {\it splitting} for $[\Lambda,n,r,\beta]$, if we have $\Lambda(k) = \bigoplus_{i=1}^{\ell} \Lambda^i(k)\ (k \in \mathbb{Z}),\ \beta = \sum_{i=1}^{\ell} \beta_i$, where for each $i$, $\Lambda^i(k) = \Lambda(k) \cap V^i\ (k \in \mathbb{Z})$, and for the projection $\Vec{1}^i: V \to V^i$ with kernel $\bigoplus_{j\ne i}V^j$, $\beta_i = \Vec{1}^i\beta\Vec{1}^i$.

\begin{defn} (\cite[3.2]{St1}).      
A stratum $[\Lambda,n,r,\beta]$ in $A$ is called {\it semisimple}, if either it is null or $\nu_\Lambda(\beta) = -n$ and there exists a splitting $V = \bigoplus_{i=1}^\ell V^i$ for the stratum such that
 \begin{enumerate}
 \item for $1 \le i \le \ell$, $[\Lambda^i,q_i,r,\beta_i]$ is a simple or null stratum in $A^i = \mathrm{End}_F(V^i)$, where $q_i = r$ if $\beta_i = 0,\ \nu_\Lambda(\beta_i) = -q_i$ otherwise; and \
\item for $1 \le i, j \le \ell,\ i \ne j$, the stratum $[\Lambda^i \oplus \Lambda^j,q,r,\beta_i+\beta_j]$ is not equivalent to a simple stratum or null stratum, with $q = \max \{q_i,q_j\}$.
\end{enumerate}
\end{defn}

A semisimple stratum $[\Lambda,n,r,\beta]$ is called {\it skew}, if (1) $\Lambda$ is self-dual, (2) $\overline{\beta} = - \beta$, and (3) $V = \bigoplus_{i=1}^\ell V^i$ in Definition 1.1 is orthogonal with respect to the Hermitian form $h$.

Let $[\Lambda,n,r,\beta]$ be a skew semisimple stratum in $A$. Denote by $B$ the $A$-centralizer of $\beta$, and set $\widetilde{G}_E = B^\times \cap \widetilde{G}$, $G^+_E = B^\times \cap G^+$, and $G_E = B^\times \cap G$.
Then we write
\begin{center}
$\widetilde{P}(\Lambda_{\mathfrak{o}_E}) = \widetilde{P}(\Lambda) \cap \widetilde{G}_E,\ \widetilde{P}_n(\Lambda_{\mathfrak{o}_E}) = \widetilde{P}_n(\Lambda) \cap \widetilde{G}_E\ (n \ge 1),$\\
$P(\Lambda_{\mathfrak{o}_E}) = \widetilde{P}(\Lambda_{\mathfrak{o}_E}) \cap G_E,\ P_n(\Lambda_{\mathfrak{o}_E}) = \widetilde{P}_n(\Lambda_{\mathfrak{o}_E}) \cap G_E\ (n \ge 1)$.
\end{center}
Similarly, $P^+(\Lambda_{\mathfrak{o}_E}) = \widetilde{P}(\Lambda_{\mathfrak{o}_E}) \cap G^+_E$. Denote by $P^\mathrm{o}(\Lambda_{\mathfrak{o}_E})$ the inverse image in $P(\Lambda_{\mathfrak{o}_E})$ of the identity component of the quotient $P(\Lambda_{\mathfrak{o}_E})/P_1(\Lambda_{\mathfrak{o}_E}).$

\section{Good skew semisimple strata}         

We define a skew semisimple stratum in $A$ which is called {\it good}. Assume that $[\Lambda,n,0,\beta]$ is a skew semisimple stratum in $A$ with $V = \bigoplus_{i=1}^{\ell+1} V^i$ and $\beta = \Sigma_{i=1}^{\ell+1} \beta_i$ as splitting. Then, by definition, the Hermitian form $h$ can be decomposed into an orthogonal direct sum:
 $h = \bigoplus_{i=1}^{\ell+1} h_i$ on $V = \bigoplus_{i=1}^{\ell+1} V^i,$ where each $h_i$ is the restriction of $h$ to $V^i$.

Let $V = \bigoplus_{j=-m}^m W^{(j)}$ be a decomposition of $V$ into subspaces such that
\begin{enumerate}
  \item $W^{(j)} = \bigoplus_{i=1}^{\ell+1} (W^{(j)} \cap V^i)$, for $-m \le j \le m$, and $V^i = \bigoplus_{j=-m}^m (W^{(j)} \cap V^i)$, for $1 \le i \le \ell+1$,\
  \item $W^{(j)} \cap V^i$ is an $E_i$-subspace of $V^i$, for $-m \le j \le m$ and $1 \le i \le \ell+1$.
\end{enumerate}
By \cite[5.1]{St2}, we say that $V = \bigoplus_{j = -m}^m W^{(j)}$ is {\it properly subordinate to } $[\Lambda,n,0,\beta]$, if the following conditions are satisfied: 
\begin{enumerate}
  \item $\Lambda(n) = \bigoplus_{j = -m}^m \Lambda^{(j)}(n)\ (n \in \mathbb{Z})$, where $\Lambda^{(j)}(n) = \Lambda(n) \cap W^{(j)}$,
  \item For any integers $k$ and $i$, with $1 \le i \le \ell+1$, there exists at most one $j$, $-m \le j \le m$, such that
  \begin{equation*}
  (\Lambda(k) \cap W^{(j)} \cap V^i) \supsetneq (\Lambda(k+1) \cap W^{(j)} \cap V^i).
  \end{equation*}
\end{enumerate}

Moreover, by \cite[5.3]{St2}, $V = \bigoplus_{j = -m}^m W^{(j)}$ is called {\it self-dual}, if the orthogonal complement $(W^{(j)})^\bot$ of $W^{(j)}$ is equal to $\bigoplus_{k \ne j} W^{(k)}$, with respect to the form $h$.

\begin{defn}    
Let $[\Lambda,n,0,\beta]$ be a skew semisimple stratum in $A$ with $V = \bigoplus_{i=1}^{\ell+1} V^i$ and $\beta = \Sigma_{i=1}^{\ell+1} \beta_i$ a splitting. We say that the stratum $[\Lambda,n,0,\beta]$ is {\it good}, if there exists a self-dual decomposition $V = \bigoplus_{j=-m}^m W^{(j)}$ which satisfies the following properties:
\begin{enumerate}
  \item $V = \bigoplus_{j=-m}^m W^{(j)}$ is {\it exactly subordinate} to $[\Lambda,n,0,\beta]$, in the sense of \cite[Definition 6.5]{St2}, that is, it is minimal among all self-dual decompositions which are properly subordinate to $[\Lambda,n,0,\beta],$\
  \item for $j \ne 0$, $W^{(j)}$ is contained in $V^{\ell+1}$,\
  \item for $j \ne 0$, $\dim_{E_{\ell+1}}W^{(j)}$ are all the same, say $f$.
\end{enumerate}
\end{defn}

We assume that $[\Lambda,n,0,\beta]$ is a good skew semisimple stratum in $A$. Then by definition we have
\begin{eqnarray}
\displaystyle W^{(0)} &=& \Bigl{(}\bigoplus_{i=1}^\ell V^i \Bigl{)} \oplus (W^{(0)} \cap V^{\ell+1}),\\
\displaystyle V^{\ell+1} &=& (W^{(0)} \cap V^{\ell+1}) \oplus \Bigl{(} \bigoplus_{j=-m, j \ne 0}^m W^{(j)} \Bigl{)},
\end{eqnarray}
where possibly $W^{(0)} \cap V^{\ell+1} = (0)$.
Set $E' = E_{\ell+1} = F[\beta_{\ell+1}]$ , and let $N$ be the positive integer defined by 
\begin{equation*}
\displaystyle \dim_F(\bigoplus_{j=-m, j \ne 0}^mW^{(j)}) = 2N.
\end{equation*}
 Since $\dim_{E'}(W^{(j)}) = f$, for $j \ne 0$, we have
\begin{equation}
N = m\dim_F(W^{(j)}) = m[E':F]f,
\end{equation}
where $[E':F]$ denotes the field extension degree of $E'/F$.

\section{Beta extensions}     

In this section, we assume that $[\Lambda,n,0,\beta]$ is a good skew semisimple stratum in $A$ with $V = \bigoplus_{i=1}^{\ell+1} V^i$ and $\beta = \Sigma_{i=1}^{\ell+1} \beta_i$ a splitting, as defined in the previous section. Set $E_i = F[\beta_i]$, for $1 \le i \le \ell+1$, and $E = \bigoplus_{i=1}^{\ell+1} E_i$.

In \cite{St1}, we have the $\mathfrak{o}_F$-lattices  $\widetilde{\mathfrak{H}}^t = \widetilde{\mathfrak{H}}^t(\beta,\Lambda),\ \widetilde{\mathfrak{J}}^t = \widetilde{\mathfrak{J}}^t(\beta,\Lambda)$ in $A = \mathrm{End}_F(V)$,
and the compact open subgroups $\widetilde{H}^t = \widetilde{H}^t(\beta,\Lambda),\ \widetilde{J}^t = \widetilde{J}^t(\beta,\Lambda)$ of $\widetilde{G}$, for $t = 0, 1$. Since these objects are $\sigma$-stable
, we obtain compact open subgroups of $G^+$ and $G$ as follows:
\begin{equation*}
J^+(\beta,\Lambda) = \widetilde{J}(\beta,\Lambda) \cap G^+,\ H^t(\beta,\Lambda) = \widetilde{H}^t(\beta,\Lambda) \cap G,\ J^t(\beta,\Lambda) = \widetilde{J}^t(\beta,\Lambda) \cap G,
\end{equation*}
for $t = 0, 1$. Then $J^+(\beta,\Lambda) = P^+(\Lambda_{\mathfrak{o}_E})J^1(\beta,\Lambda)$ and $J(\beta,\Lambda) = P(\Lambda_{\mathfrak{o}_E})J^1(\beta,\Lambda)$ (cf. \cite[3.1]{St2}). Put $J^\mathrm{o}(\beta,\Lambda) = P^\mathrm{o}(\Lambda_{\mathfrak{o}_E})J^1(\beta,\Lambda).$
Then we have $J^\mathrm{o}(\beta,\Lambda) \subset J(\beta,\Lambda) \subset J^+(\beta,\Lambda)$.

Denote by $\widetilde{\mathcal{C}}(\Lambda,0,\beta)$ the set of all semisimple characters of $\widetilde{H}^1(\beta,\Lambda)$, defined by \cite[Definition 3.13]{St1}. Put
\begin{equation*}
\mathcal{C}_-(\Lambda,0,\beta) = \{\theta \vert_{H^1(\beta,\Lambda)} \vert \theta \in \widetilde{\mathcal{C}}(\Lambda,0,\beta)\ \text{and}\ \theta^\sigma(x) = \theta(x)\ (x \in \widetilde{H}^1(\beta,\Lambda))\},
\end{equation*}
where $\theta^\sigma(x) = \theta(\sigma^{-1}(x))$, which is defined as in \cite[3.6]{St1}.

Let $B$ be the $A$-centralizer of $\beta = \sum_{i=1}^{\ell+1} \beta_i$. Then we have $B = \bigoplus_{i=1}^{\ell+1} B^i$, where $B^i = \mathrm{End}_{E_i}(V^i)$. As in \cite[4.2]{St2}, we choose a self-dual $\mathfrak{o}_F$-lattice sequence $\Lambda^\mathsf{M}$ in $V$ which satisfies (1) $\mathfrak{b}_0(\Lambda^\mathsf{M}) = \mathfrak{a}_0(\Lambda^\mathsf{M}) \cap B$ is a maximal self-dual $\mathfrak{o}_E$-order of $B = \bigoplus_{i=1}^{\ell+1} B^i$, and (2) $\mathfrak{b}_0(\Lambda^\mathsf{M}) \supset \mathfrak{b}_0(\Lambda)$. 
From \cite[Corollary 2.9]{St2}, there exists a self-dual $\mathfrak{o}_E$-lattice sequence $\Lambda^\mathsf{m}$ in $V$ which satisfies (1) $\mathfrak{b}_0(\Lambda^\mathsf{m})$ is a minimal self-dual $\mathfrak{o}_E$-order of $B$, and (2) $\mathfrak{a}_0(\Lambda^\mathsf{m}) \subset \mathfrak{a}_0(\Lambda)$.
Thus we have $\mathfrak{b}_0(\Lambda^\mathsf{m}) \subset \mathfrak{b}_0(\Lambda) \subset \mathfrak{b}_0(\Lambda^\mathsf{M}).$

Since the invariants $k_0(\beta,\Lambda^\mathsf{m})$ and $k_0(\beta,\Lambda^\mathsf{M})$ are negative integers, there exist integers $n_\mathsf{m}, n_\mathsf{M}$ such that $[\Lambda^\mathsf{m},n_\mathsf{m},0,\beta],\ [\Lambda^\mathsf{M},n_\mathsf{M},0,\beta]$ are skew semisimple strata in $A$.
 Let $\theta \in \mathcal{C}_-(\Lambda,0,\beta)$. Then from \cite[3.2]{St1}, there exist canonical bijections $\tau_{\Lambda,\Lambda^\mathsf{m},\beta}: \mathcal{C}_-(\Lambda,0,\beta) \simeq \mathcal{C}_-(\Lambda^\mathsf{m},0,\beta)$ and $\tau_{\Lambda,\Lambda^\mathsf{M},\beta}: \mathcal{C}_-(\Lambda,0,\beta) \simeq \mathcal{C}_-(\Lambda^\mathsf{M},0,\beta)$. Put $\theta_\mathsf{m} = \tau_{\Lambda,\Lambda^\mathsf{m},\beta}(\theta),\ \theta_\mathsf{M} = \tau_{\Lambda,\Lambda^\mathsf{M},\beta}(\theta).$

\begin{prop}$($\cite[Proposition 3.5]{St2}$)$         
There exists a unique irreducible representation $\eta$ (resp. $\eta_\mathsf{m}$, $\eta_\mathsf{M}$) of $J^1 = J^1(\beta,\Lambda)$ (resp. $J_\mathsf{m}^1 = J^1(\beta,\Lambda^\mathsf{m})$, $J_\mathsf{M}^1 = J^1(\beta,\Lambda^\mathsf{M})$) containing $\theta$ (resp. $\theta_\mathsf{m}$, $\theta_\mathsf{M}$).
\end{prop}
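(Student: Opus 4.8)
The plan is to reduce to the corresponding statement for $\widetilde{G} = \mathrm{Aut}_F(V)$ and then descend to $G$ by the Glauberman correspondence, which is available because $\Sigma = \{1,\sigma\}$ has order $2$ and we assume $p \ne 2$. First, by the definition of $\mathcal{C}_-(\Lambda,0,\beta)$ we may fix $\widetilde{\theta} \in \widetilde{\mathcal{C}}(\Lambda,0,\beta)$ with $\widetilde{\theta}^\sigma = \widetilde{\theta}$ and $\widetilde{\theta}\vert_{H^1(\beta,\Lambda)} = \theta$. The analogue of \cite[Proposition 5.1.1]{BK1} for semisimple characters, established in \cite{St1}, produces a unique irreducible representation $\widetilde{\eta}$ of $\widetilde{J}^1 = \widetilde{J}^1(\beta,\Lambda)$ containing $\widetilde{\theta}$; moreover $\widetilde{\eta}\vert_{\widetilde{H}^1}$ is a multiple of $\widetilde{\theta}$ and $\dim\widetilde{\eta} = [\widetilde{J}^1 : \widetilde{H}^1]^{1/2}$. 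The mechanism behind this is that $\widetilde{H}^1$ is normal in $\widetilde{J}^1$ with $\widetilde{J}^1/\widetilde{H}^1$ an $\mathbb{F}_p$-vector space on which the commutator pairing $(x,y) \mapsto \widetilde{\theta}([\widehat{x},\widehat{y}])$ is a non-degenerate alternating form, so that $\widetilde{J}^1$ modulo $\ker\widetilde{\theta}$ is a Heisenberg $p$-group and Stone--von Neumann applies.

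Since $\widetilde{\theta}^\sigma = \widetilde{\theta}$, the representation $\widetilde{\eta}^\sigma$ also contains $\widetilde{\theta}$, hence $\widetilde{\eta}^\sigma \simeq \widetilde{\eta}$ by uniqueness. Now $\widetilde{\eta}$ factors through a finite quotient of $\widetilde{J}^1$ which is a $p$-group, and $|\Sigma| = 2$ is prime to $p$, so the Glauberman correspondence with respect to $\Sigma$ attaches to the $\Sigma$-stable irreducible $\widetilde{\eta}$ a unique irreducible representation $\eta$ of $(\widetilde{J}^1)^\Sigma = \widetilde{J}^1 \cap G^+ = J^1(\beta,\Lambda)$, the last equality holding because a pro-$p$ subgroup of $G^+$ with $p$ odd is contained in the identity component $G$. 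Using the compatibility of the Glauberman correspondence with the $\Sigma$-stable normal subgroup $\widetilde{H}^1$, together with the elementary fact that the Glauberman correspondent of the linear character $\widetilde{\theta}$ of the odd-order abelian group $\widetilde{H}^1/\ker\widetilde{\theta}$ is simply its restriction $\theta$ to $H^1 = (\widetilde{H}^1)^\Sigma$, one gets that $\eta\vert_{H^1}$ is a multiple of $\theta$; in particular $\eta$ contains $\theta$.

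For uniqueness, observe that $\sigma$ preserves the non-degenerate alternating form on $\widetilde{J}^1/\widetilde{H}^1$, so, as $p \ne 2$, its fixed subspace $J^1/H^1 = (\widetilde{J}^1/\widetilde{H}^1)^\Sigma$ is again a non-degenerate symplectic $\mathbb{F}_p$-space under the $\theta$-commutator pairing. Hence $J^1$ modulo $\ker\theta$ is itself a Heisenberg $p$-group with central character $\theta$, and Stone--von Neumann shows that the irreducible representations of $J^1$ containing $\theta$ form a single isomorphism class, which must be that of $\eta$. (Alternatively, uniqueness follows directly from the Glauberman correspondence being a bijection, via the same normal-subgroup compatibility.) The identical argument, with $\Lambda$ replaced by $\Lambda^\mathsf{m}$ or $\Lambda^\mathsf{M}$ and $\theta$ by $\theta_\mathsf{m}$ or $\theta_\mathsf{M}$, yields $\eta_\mathsf{m}$ and $\eta_\mathsf{M}$.

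The main obstacle is the input from \cite{St1}: the non-degeneracy of the $\theta$-commutator pairing on $\widetilde{J}^1/\widetilde{H}^1$ — equivalently, the precise duality between the lattices $\widetilde{\mathfrak{J}}^1(\beta,\Lambda)$ and $\widetilde{\mathfrak{H}}^1(\beta,\Lambda)$ with respect to the bilinear pairing built from $\psi_F$ and $\mathrm{tr}(\beta\,\cdot\,)$ — together with the $\sigma$-equivariance of this structure that makes the passage to $\Sigma$-fixed points behave well. Once these structural facts about semisimple strata are granted, the remainder is formal Clifford theory, Glauberman, and Stone--von Neumann.
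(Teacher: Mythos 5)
The paper gives no proof of this proposition at all---it is quoted directly from \cite[Proposition 3.5]{St2}, and Stevens's own argument there (resting on \cite[\S 3.6]{St1}) is precisely the one you reconstruct: the Heisenberg/Stone--von Neumann statement for $\widetilde{J}^1/\widetilde{H}^1$ over $\widetilde{G}$, followed by descent to $J^1=(\widetilde{J}^1)^{\Sigma}$ via the Glauberman correspondence for the order-two group $\Sigma$ acting on a pro-$p$ group with $p$ odd, together with its compatibility with the normal subgroup $\widetilde{H}^1$. Your proposal is correct and takes essentially the same route as the cited source.
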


From the above choice of $\Lambda^\mathsf{m},\ \Lambda^\mathsf{M}$, we can form the group $J_{\mathsf{m},\mathsf{M}}^1 = P_1(\Lambda^\mathsf{m}_{\mathfrak{o}_E})J_\mathsf{M}^1.$

\begin{prop}$($\cite[Proposition 3.7 and Theorem 4.1]{St2}$)$    
\rm{(i)} There exists a unique irreducible representation $\eta_{\mathsf{m},\mathsf{M}}$ of $J_{\mathsf{m},\mathsf{M}}^1$ satisfying (1) $\eta_{\mathsf{m},\mathsf{M}} \vert J_\mathsf{M}^1 = \eta_\mathsf{M}$, (2) $\eta_{\mathsf{m},\mathsf{M}}$ and $\eta_\mathsf{m}$ induce equivalent irreducible representations of $P_1(\Lambda^\mathsf{m})$.

\rm{(ii)} There exists a representation $\kappa_\mathsf{M}$ of $J^+_\mathsf{M} = J^+(\beta,\Lambda^\mathsf{M})$ that extends $\eta_{\mathsf{m},\mathsf{M}}$.
\end{prop}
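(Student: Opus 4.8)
The plan is to follow the transfer-of-$\beta$-extensions machinery of Bushnell--Kutzko \cite[\S5]{BK1}, in the semisimple form of \cite{St1,St2}, combined with Clifford theory over the relevant compact quotients. For (i), the first step is to record the structure of $J^1_{\mathsf{m},\mathsf{M}}=P_1(\Lambda^\mathsf{m}_{\mathfrak{o}_E})J^1_\mathsf{M}$. Since $B$ is the $A$-centralizer of $\beta$ and $P_1(\Lambda^\mathsf{m}_{\mathfrak{o}_E})\subset B^\times$, conjugation by $P_1(\Lambda^\mathsf{m}_{\mathfrak{o}_E})$ preserves $\widetilde{\mathfrak{H}}^1(\beta,\Lambda^\mathsf{M})$ and $\widetilde{\mathfrak{J}}^1(\beta,\Lambda^\mathsf{M})$, hence normalizes $H^1(\beta,\Lambda^\mathsf{M})$ and $J^1_\mathsf{M}$; so $J^1_\mathsf{M}$ is normal in $J^1_{\mathsf{m},\mathsf{M}}$ with abelian pro-$p$ quotient. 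Conjugation by $B^\times$ also fixes every semisimple character in $\widetilde{\mathcal{C}}(\Lambda^\mathsf{M},0,\beta)$ (an intertwining property from \cite{St1}), so $P_1(\Lambda^\mathsf{m}_{\mathfrak{o}_E})$ stabilizes $\theta_\mathsf{M}$, and by the uniqueness of $\eta_\mathsf{M}$ as the irreducible representation of $J^1_\mathsf{M}$ containing $\theta_\mathsf{M}$ it stabilizes $\eta_\mathsf{M}$ as well.

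The second step is to extend $\eta_\mathsf{M}$ across the abelian $p$-group $J^1_{\mathsf{m},\mathsf{M}}/J^1_\mathsf{M}$. The obstruction lies in $H^2(J^1_{\mathsf{m},\mathsf{M}}/J^1_\mathsf{M};\mathbb{C}^\times)$ and vanishes because $\eta_\mathsf{M}$ is a Heisenberg representation for the nondegenerate alternating form defined by $\theta_\mathsf{M}$ on $J^1_\mathsf{M}/H^1(\beta,\Lambda^\mathsf{M})$, so the obstruction is a pullback of the metaplectic $2$-cocycle, of order dividing $2$, which is trivial in the cohomology of a pro-$p$ group since $p\neq2$. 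The extensions of $\eta_\mathsf{M}$ then form a torsor under the character group of $J^1_{\mathsf{m},\mathsf{M}}/J^1_\mathsf{M}$. To single one out, set $\rho=\mathrm{Ind}_{J^1_\mathsf{m}}^{P_1(\Lambda^\mathsf{m})}\eta_\mathsf{m}$; this is irreducible (from the description of the intertwining of $\eta_\mathsf{m}$ in \cite{St1}), and a Mackey-theoretic analysis of $\rho|_{J^1_{\mathsf{m},\mathsf{M}}}$ --- carried out using the transfer maps $\tau_{\Lambda,\Lambda^\mathsf{m},\beta}$ and $\tau_{\Lambda,\Lambda^\mathsf{M},\beta}$ between semisimple characters --- shows that exactly one extension of $\eta_\mathsf{M}$ both occurs in $\rho$ and induces $\rho$ back up to $P_1(\Lambda^\mathsf{m})$. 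That extension is $\eta_{\mathsf{m},\mathsf{M}}$, and this gives both existence and uniqueness in (i).

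For (ii), one repeats the normalization argument with $P^+(\Lambda^\mathsf{M}_{\mathfrak{o}_E})$ in place of $P_1(\Lambda^\mathsf{m}_{\mathfrak{o}_E})$: since $J^+_\mathsf{M}=P^+(\Lambda^\mathsf{M}_{\mathfrak{o}_E})J^1_\mathsf{M}$ with $P^+(\Lambda^\mathsf{M}_{\mathfrak{o}_E})\subset B^\times$ normalizing $J^1_\mathsf{M}$ and fixing $\theta_\mathsf{M}$, the group $J^+_\mathsf{M}$ normalizes $\eta_\mathsf{M}$. We then seek $\kappa_\mathsf{M}$ on $J^+_\mathsf{M}$ restricting to $\eta_\mathsf{M}$ on $J^1_\mathsf{M}$, which automatically makes it agree with $\eta_{\mathsf{m},\mathsf{M}}$ on their common subgroup $J^1_\mathsf{M}$. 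The quotient $J^+_\mathsf{M}/J^1_\mathsf{M}$ is the group of rational points of a possibly disconnected reductive group over the residue field, and it maps to the symplectic group of $J^1_\mathsf{M}/H^1(\beta,\Lambda^\mathsf{M})$ with the form defined by $\theta_\mathsf{M}$; the obstruction to extending $\eta_\mathsf{M}$ to $J^+_\mathsf{M}$ is the pullback of the metaplectic $2$-cocycle, which splits because the finite metaplectic cover splits when $p$ is odd. An extension $\kappa_\mathsf{M}$ then exists, realized concretely via a genuine Weil representation of this finite reductive quotient patched with $\eta_\mathsf{M}$; this is exactly the construction of a $\beta$-extension in \cite[\S5]{BK1} and \cite[\S4]{St2}.

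The delicate points, and where the hypothesis $p\neq2$ is essential, are the vanishing of these two extension obstructions --- the metaplectic $2$-cocycle over the abelian $p$-quotient in (i) and over the finite reductive quotient in (ii) --- together with the Mackey bookkeeping in (i) that pins down the unique extension and guarantees compatibility with the eventual $\kappa_\mathsf{M}$. Everything else reduces to Clifford theory and the formal intertwining and transfer properties of semisimple characters established in \cite{St1}.
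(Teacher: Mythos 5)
This proposition is not proved in the paper at all: it is quoted verbatim from Stevens, and the ``proof'' consists of the citation to \cite[Proposition 3.7 and Theorem 4.1]{St2}. So the comparison is really with Stevens's arguments, and measured against those your sketch has one genuine gap, in part (ii). The statement requires $\kappa_\mathsf{M}$ to extend $\eta_{\mathsf{m},\mathsf{M}}$, i.e.\ to restrict to $\eta_{\mathsf{m},\mathsf{M}}$ on all of $J^1_{\mathsf{m},\mathsf{M}}=P_1(\Lambda^\mathsf{m}_{\mathfrak{o}_E})J^1_\mathsf{M}$, which is strictly larger than $J^1_\mathsf{M}$ whenever $G_E$ is not anisotropic. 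You construct an extension of $\eta_\mathsf{M}$ to $J^+_\mathsf{M}$ and assert that it ``automatically'' agrees with $\eta_{\mathsf{m},\mathsf{M}}$ because the two agree on $J^1_\mathsf{M}$. That only shows agreement on $J^1_\mathsf{M}$: the restriction of your $\kappa_\mathsf{M}$ to $J^1_{\mathsf{m},\mathsf{M}}$ is \emph{some} extension of $\eta_\mathsf{M}$, hence of the form $\eta_{\mathsf{m},\mathsf{M}}\otimes\chi$ for a character $\chi$ of the nontrivial $p$-group $J^1_{\mathsf{m},\mathsf{M}}/J^1_\mathsf{M}\simeq P_1(\Lambda^\mathsf{m}_{\mathfrak{o}_E})/P_1(\Lambda^\mathsf{m}_{\mathfrak{o}_E})\cap J^1_\mathsf{M}$ (essentially the unipotent radical of a Borel of the finite reductive quotient), and there is no reason for $\chi$ to be trivial or to extend to $J^+_\mathsf{M}$ so that it can be twisted away. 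Arranging this compatibility is precisely the content, and the difficulty, of \cite[Theorem 4.1]{St2}; it is the reason Stevens introduces $\eta_{\mathsf{m},\mathsf{M}}$ in the first place, since for classical groups the $GL_N$ characterization of $\beta$-extensions by their intertwining fails. A correct proof must address this head-on, not dismiss it.

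Part (i) is essentially the right argument (the adaptation of \cite[(5.1.15)]{BK1} carried out in \cite[\S 3]{St2}): one uses the irreducibility of $\mathrm{Ind}_{J^1_\mathsf{m}}^{P_1(\Lambda^\mathsf{m})}\eta_\mathsf{m}$ and Clifford/Mackey theory over $J^1_{\mathsf{m},\mathsf{M}}$ to produce and pin down the extension. Two caveats: the quotient $J^1_{\mathsf{m},\mathsf{M}}/J^1_\mathsf{M}$ is a finite $p$-group but not in general abelian (this does not hurt your cohomological vanishing, since the Schur multiplier of a finite $p$-group is a $p$-group and your obstruction class has order dividing $2$); and you gloss over the index identity comparing $[J^1_\mathsf{m}:H^1(\beta,\Lambda^\mathsf{m})]$, $[J^1_\mathsf{M}:H^1(\beta,\Lambda^\mathsf{M})]$ and $[P_1(\Lambda^\mathsf{m}):J^1_{\mathsf{m},\mathsf{M}}]$, which is what makes the dimension count close up and yields both that the selected constituent restricts to $\eta_\mathsf{M}$ with multiplicity one and that it induces all of $\mathrm{Ind}\,\eta_\mathsf{m}$; in the classical-group setting this identity is a nontrivial computation in \cite{St2}, not a formality.
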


From \cite[Definition 4.5]{St2}, there exists an extension $\kappa$ of $\eta$ to $J^+$, which is called a {\it $\beta$-extension, relative to $\Lambda^\mathsf{M}$, and compatible with $\kappa_\mathsf{M}$}. The representation $\kappa$ depends only on $\Lambda^\mathsf{M}$, not on the choice of $\Lambda^\mathsf{m}$.

\section{Simple types}           

Let $[\Lambda,n,0,\beta]$ be a good skew semisimple stratum in $A$ with the splitting $V = \bigoplus_{i=1}^{\ell+1} V^i$ and $\beta = \sum_{i=1}^{\ell+1} \beta_i$, defined in section 2. Let $E_i = F[\beta_i]$, for $1 \le i \le \ell+1$, and $E = F[\beta] = \bigoplus_{i=1}^{\ell+1} E_i$. Set $\beta' = \beta_{\ell+1}$ and $E' = E_{\ell+1}$.

Let $\widetilde{M}$ be the stabilizer in $\widetilde{G}$ of $V = \bigoplus_{j=-m}^m W^{(j)}$. Then $\widetilde{M}$ is a $\sigma$-stable Levi subgroup of $\widetilde{G}$. Let $\widetilde{P}$ be a $\sigma$-stable parabolic subgroup with Levi factor $\widetilde{M}$, and $\widetilde{U}$ the unipotent radical of $\widetilde{P}$. We select the Lie algebra of $\widetilde{U}$ to be elements whose lower triangular block matrices are zero (cf. \cite[(7.1.13)]{BK1}). 
Then $P = \widetilde{P} \cap G,\ M^+ = \widetilde{M} \cap G^+,\ M = \widetilde{M} \cap G$ are parabolic subgroups of $G$ and Levi subgroups of $G^+$ and $G$, respectively. Let $U = \widetilde{U} \cap G$. Then $P = MU$ is a Levi decomposition. There exist isomorphisms
\begin{equation*}
\displaystyle M^+ \simeq G^{(0) +} \times \prod_{j=1}^m \widetilde{G}^{(j)},\ M \simeq G^{(0)} \times \prod_{j=1}^m \widetilde{G}^{(j)},
\end{equation*}
where $G^{(0) +}$ is the unitary group of the Hermitian space $(W^{(0)},h\vert_{W^{(0)}})$, $G^{(0)}$ is the identity component of $G^{(0) +}$, and $\widetilde{G}^{(j)} = \mathrm{Aut}_F(W^{(j)})$, for $1 \le j \le m$. By (2.3), we have $\dim_F(W^{(j)}) = N/m$, for $j \ne 0$. Hence there exist isomorphisms
\begin{equation*}
\widetilde{G}^{(j)} \simeq GL(N/m,F),
\end{equation*}
for $1 \le j \le m$.

\begin{prop}          
The subgroups $H^1(\beta,\Lambda),\ J^1(\beta,\Lambda)$, $J^\mathrm{o}(\beta,\Lambda)$, and $J(\beta,\Lambda)$ of $G$ have Iwahori decompositions with respect to $(M,P)$; letting $\mathcal{G}$ be any of those groups, we have
\begin{equation*}
\mathcal{G} = (\mathcal{G} \cap U^-)(\mathcal{G} \cap M)(\mathcal{G} \cap U),
\end{equation*}
where $U^-$ denotes the opposite of $U$ relative to $M$.
\end{prop}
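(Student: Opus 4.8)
The plan is to reduce the statement to two facts already available in the literature: the existence of Iwahori decompositions for the ambient groups $\widetilde{P}_n(\Lambda)$ and their compact subgroups when the lattice sequence $\Lambda$ splits along the decomposition $V=\bigoplus_{j=-m}^m W^{(j)}$, and the compatibility of the lattices $\widetilde{\mathfrak{H}}^t(\beta,\Lambda)$, $\widetilde{\mathfrak{J}}^t(\beta,\Lambda)$ with this splitting. The key input is that the decomposition $V=\bigoplus_{j=-m}^m W^{(j)}$ is properly subordinate to $[\Lambda,n,0,\beta]$ — in particular $\Lambda(k)=\bigoplus_j(\Lambda(k)\cap W^{(j)})$ for all $k$ — so that $\widetilde{M}$ normalizes each $\mathfrak{a}_n(\Lambda)$ and the usual triangular-decomposition argument (as in \cite[(7.1.13)]{BK1} and \cite[\S1]{St2}) applies.

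First I would recall, following \cite[\S1]{St2} or the general linear case in \cite[(7.1)]{BK1}, that since $\Lambda$ is split by $V=\bigoplus_j W^{(j)}$ the groups $\widetilde{P}_n(\Lambda)$ ($n\ge 1$) and $\widetilde{P}(\Lambda)$ have Iwahori decompositions with respect to $(\widetilde{M},\widetilde{P})$: writing an element $x\in\mathfrak{a}_n(\Lambda)$ in block form relative to the $W^{(j)}$, the lower-triangular, diagonal, and upper-triangular parts each again lie in $\mathfrak{a}_n(\Lambda)$, whence $1+\mathfrak{a}_n(\Lambda)=(1+\mathfrak{a}_n\cap\widetilde{U}^-)(1+\mathfrak{a}_n\cap\widetilde{M})(1+\mathfrak{a}_n\cap\widetilde{U})$. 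Intersecting with $G$ and using that $\widetilde{U}^-$, $\widetilde{M}$, $\widetilde{U}$ are $\sigma$-stable (because the decomposition is self-dual), this descends to an Iwahori decomposition of $P_n(\Lambda)$ and $P(\Lambda)$ with respect to $(M,P)$; the orthogonality of the self-dual decomposition guarantees that $U^- = \widetilde U^-\cap G$ is genuinely the opposite of $U$ relative to $M$.

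Next I would pass from $\widetilde{P}_n(\Lambda)$ to the groups $\widetilde{H}^1(\beta,\Lambda)$ and $\widetilde{J}^1(\beta,\Lambda)$. Here the essential point is that the defining lattices $\widetilde{\mathfrak{H}}^1(\beta,\Lambda)$ and $\widetilde{\mathfrak{J}}^1(\beta,\Lambda)$ are block-decomposed along $V=\bigoplus_j W^{(j)}$. Because $\beta\in\bigoplus_i B^i$ and the decomposition $V=\bigoplus_j W^{(j)}$ refines $V=\bigoplus_i V^i$ with each $W^{(j)}\cap V^i$ an $E_i$-subspace and properly subordinate to the stratum, the recursive construction of $\widetilde{\mathfrak{H}}^1$, $\widetilde{\mathfrak{J}}^1$ in \cite{St1} (via the sets $\mathfrak{m}_k$, $\mathfrak{n}_k$ and the tame corestrictions, all of which respect the block structure) yields $\widetilde{\mathfrak{H}}^1=\bigoplus_{j,j'}\mathbf{1}^{(j)}\widetilde{\mathfrak{H}}^1\mathbf{1}^{(j')}$ and likewise for $\widetilde{\mathfrak{J}}^1$; this is exactly the content of \cite[\S5]{St2} (the "$\widetilde{M}$-decomposition" of semisimple characters). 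Consequently $\widetilde{H}^1$, $\widetilde{J}^1$ have Iwahori decompositions with respect to $(\widetilde{M},\widetilde{P})$, and intersecting with $G$ (again using $\sigma$-stability, $p\ne 2$, and the self-dual/orthogonal decomposition, so that for instance $H^1\cap M=\widetilde H^1\cap M$) gives the claim for $H^1(\beta,\Lambda)$ and $J^1(\beta,\Lambda)$. For $J(\beta,\Lambda)=P(\Lambda_{\mathfrak{o}_E})J^1(\beta,\Lambda)$ and $J^{\mathrm{o}}(\beta,\Lambda)=P^{\mathrm{o}}(\Lambda_{\mathfrak{o}_E})J^1(\beta,\Lambda)$, I would combine the Iwahori decomposition of $J^1$ just obtained with the one for $P(\Lambda_{\mathfrak{o}_E})$, $P^{\mathrm{o}}(\Lambda_{\mathfrak{o}_E})$ — these follow from the split-lattice argument of the first step applied inside $B^\times$, since $\Lambda_{\mathfrak{o}_E}$ is split by $V=\bigoplus_j W^{(j)}$ as an $\mathfrak{o}_E$-lattice sequence by condition (1) in the definition of properly subordinate — noting that $P(\Lambda_{\mathfrak{o}_E})$ normalizes $J^1$ so the product of the two Iwahori decompositions is again one.

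The main obstacle I expect is not any single step but the bookkeeping needed to verify that $\widetilde{\mathfrak{H}}^1$ and $\widetilde{\mathfrak{J}}^1$ are block-diagonal-plus-off-diagonal in the required way, i.e.\ genuinely compatible with the splitting $V=\bigoplus_j W^{(j)}$ rather than merely with $V=\bigoplus_i V^i$; this is where "properly subordinate" (and hence "exactly subordinate", via goodness) is used, and it is the place where one must invoke the precise results of \cite[\S5]{St2} rather than re-derive them. Once that compatibility is in hand, taking $\Sigma$-fixed points is routine given $p\ne 2$ (so that $x=(x\overline{x}^{-1})^{1/2}\cdot\ldots$ type arguments, or more simply the fact that $1+\mathfrak{a}$ has no $2$-torsion issues, let the decomposition descend), and the opposition of $U^-$ to $U$ is immediate from self-duality of the $W^{(j)}$.
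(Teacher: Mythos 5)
Your proposal is correct and follows essentially the same route as the paper: the paper's proof is simply the observation that the decomposition $V=\bigoplus_{j=-m}^m W^{(j)}$ is properly subordinate to $[\Lambda,n,0,\beta]$ by Definition 2.1, whence the result is exactly \cite[Corollary 5.10]{St2}. You unpack more of the machinery behind that corollary (the block-compatibility of $\widetilde{\mathfrak{H}}^1$ and $\widetilde{\mathfrak{J}}^1$ and the descent to $\sigma$-fixed points), but you correctly identify that the decisive input is the properly subordinate condition together with the results of \cite[\S 5]{St2}, which is precisely what the paper invokes.
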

\begin{proof}
Since $V = \bigoplus_{j=-m}^m W^{(j)}$ is properly subordinated to $[\Lambda,n,0,\beta]$ by Definition 2.1 (cf. \cite[Definition 6.5]{St2}), this follows from \cite[Corollary 5.10]{St2}.
\end{proof}

\begin{prop}         
(i) There exists a canonical isomorphism
\begin{equation*}
\displaystyle H^1(\beta,\Lambda) \cap M \simeq H^1(\beta,\Lambda^{(0)}) \times \prod_{j=1}^m\widetilde{H}^1(\beta',\Lambda^{(j)}),
\end{equation*}
with corresponding expressions for $J^1(\beta,\Lambda),\ J(\beta,\Lambda)$. 

(ii) Let $\theta \in \mathcal{C}_-(\Lambda,0,\beta)$. Then, under the above isomorphism, we have
\begin{equation*}
\displaystyle \theta \vert H^1(\beta,\Lambda) \cap M \simeq \theta^{(0)} \otimes \bigotimes_{j=1}^m (\widetilde{\theta}^{(j)})^2,
\end{equation*}
where $\theta^{(0)} \in \mathcal{C}_-(\Lambda^{(0)},0,\beta)$, $\widetilde{\theta}^{(j)} \in \widetilde{\mathcal{C}}(\Lambda^{(j)},0,\beta')$, and $(\widetilde{\theta}^{(j)})^2 \in \widetilde{\mathcal{C}}(\Lambda^{(j)},0,2\beta')$, for $1 \le j \le m$.
\end{prop}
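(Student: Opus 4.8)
The plan is to deduce everything from the corresponding statement for $\widetilde G$ and then take $\sigma$-fixed points. I would begin by recording that goodness of $[\Lambda,n,0,\beta]$ forces $\beta\in\widetilde M$: for $j\ne 0$ the space $W^{(j)}$ is an $E'$-subspace of $V^{\ell+1}$, so $\beta$ acts on it through $\beta'=\beta_{\ell+1}$, while $W^{(0)}=\bigl(\bigoplus_{i=1}^{\ell}V^i\bigr)\oplus(W^{(0)}\cap V^{\ell+1})$ is manifestly $\beta$-stable. Writing $\Lambda^{(j)}(k)=\Lambda(k)\cap W^{(j)}$ and $\beta^{(j)}=\beta|_{W^{(j)}}$, proper (indeed exact) subordination of $V=\bigoplus_jW^{(j)}$ --- in particular condition (2) of \cite[5.1]{St2} --- guarantees that each $\Lambda^{(j)}$ is an $\mathfrak o_E$-lattice sequence for which $[\Lambda^{(0)},n_0,0,\beta^{(0)}]$ is a skew semisimple stratum in $\mathrm{End}_F(W^{(0)})$, with splitting the $V^i$ $(1\le i\le\ell)$ together with $W^{(0)}\cap V^{\ell+1}$, and $[\Lambda^{(j)},n_j,0,\beta^{(j)}]$ is a simple stratum in $\mathrm{End}_F(W^{(j)})$ for $j\ne 0$. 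Here I invoke the results of \cite[\S5]{St2} on subordinate decompositions, the classical-group analogue of \cite[(7.1)]{BK1}.

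For (i) I first prove the $\widetilde G$-version. Because $V=\bigoplus_jW^{(j)}$ is properly subordinate, the $\mathfrak o_F$-lattices $\widetilde{\mathfrak H}^1(\beta,\Lambda)$ and $\widetilde{\mathfrak J}^1(\beta,\Lambda)$ are block-diagonal along the decomposition, yielding
\[
\widetilde H^1(\beta,\Lambda)\cap\widetilde M=\prod_{j=-m}^{m}\widetilde H^1(\beta^{(j)},\Lambda^{(j)}),
\]
and likewise for $\widetilde J^1$; for $\widetilde J$ one combines this with $\widetilde J=\widetilde P(\Lambda_{\mathfrak o_E})\widetilde J^1$ and the obvious decomposition of $\widetilde P(\Lambda_{\mathfrak o_E})\cap\widetilde M$. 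Now $\sigma$ acts on $\widetilde M\simeq\prod_{j=-m}^m\widetilde G^{(j)}$ by interchanging, via $h$, the factor indexed by $j$ with that indexed by $-j$; taking $\sigma$-fixed points and passing to the identity component gives $M\simeq G^{(0)}\times\prod_{j=1}^m\widetilde G^{(j)}$, an element $g\in M$ being determined by its components $g_0,g_1,\dots,g_m$ on $W^{(0)},\dots,W^{(m)}$, with $W^{(-j)}$-component $\sigma(g_j)$. Intersecting the displayed product with $G$ therefore cuts out $H^1(\beta^{(0)},\Lambda^{(0)})\times\prod_{j=1}^m\widetilde H^1(\beta^{(j)},\Lambda^{(j)})$, since $\sigma$ restricts to an isomorphism $\widetilde H^1(\beta^{(j)},\Lambda^{(j)})\xrightarrow{\sim}\widetilde H^1(\beta^{(-j)},\Lambda^{(-j)})$. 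Identifying $\beta^{(0)}$ with $\beta$ and $\beta^{(j)}$ with $\beta'$ along these embeddings gives the stated isomorphism, and the same argument applies verbatim to $J^1$ and $J$.

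For (ii), write $\theta=\widetilde\theta|_{H^1(\beta,\Lambda)}$ with $\widetilde\theta\in\widetilde{\mathcal C}(\Lambda,0,\beta)$, $\widetilde\theta^\sigma=\widetilde\theta$. The restriction formula for semisimple characters along a subordinate decomposition (\cite[\S3]{St1}, following \cite[(7.1.19)]{BK1}) gives $\widetilde\theta|_{\widetilde H^1\cap\widetilde M}=\bigotimes_{j=-m}^m\widetilde\theta^{(j)}$ with $\widetilde\theta^{(j)}\in\widetilde{\mathcal C}(\Lambda^{(j)},0,\beta^{(j)})$; the $\sigma$-invariance of $\widetilde\theta$ makes $\widetilde\theta^{(0)}$ itself $\sigma$-invariant, so $\theta^{(0)}:=\widetilde\theta^{(0)}|_{H^1(\beta^{(0)},\Lambda^{(0)})}\in\mathcal C_-(\Lambda^{(0)},0,\beta)$, and forces $\widetilde\theta^{(-j)}\circ\sigma=\widetilde\theta^{(j)}$ for $1\le j\le m$. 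Evaluating on $g\in H^1(\beta,\Lambda)\cap M$ with components $g_0,\dots,g_m$ then gives
\[
\theta(g)=\widetilde\theta^{(0)}(g_0)\prod_{j=1}^m\widetilde\theta^{(j)}(g_j)\,\widetilde\theta^{(-j)}(\sigma(g_j))=\widetilde\theta^{(0)}(g_0)\prod_{j=1}^m\widetilde\theta^{(j)}(g_j)^2,
\]
that is, $\theta|_{H^1\cap M}=\theta^{(0)}\otimes\bigotimes_{j=1}^m(\widetilde\theta^{(j)})^2$. Finally, since $p\ne 2$ one has $F[2\beta']=E'$, $\nu_\Lambda(2\beta')=\nu_\Lambda(\beta')$ and $\widetilde H^1(2\beta',\Lambda^{(j)})=\widetilde H^1(\beta',\Lambda^{(j)})$, and $\psi(2a)=\psi(a)^2$; tracing this through the inductive definition of semisimple characters identifies $x\mapsto\widetilde\theta^{(j)}(x)^2$ with the element $(\widetilde\theta^{(j)})^2$ of $\widetilde{\mathcal C}(\Lambda^{(j)},0,2\beta')$.

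The step I expect to be the main obstacle is the first paragraph together with the block-diagonal splitting $\widetilde H^1(\beta,\Lambda)\cap\widetilde M=\prod_j\widetilde H^1(\beta^{(j)},\Lambda^{(j)})$ used in (i): one must verify that each restricted stratum is genuinely simple (resp. skew semisimple) and that the lattices $\widetilde{\mathfrak H}^1,\widetilde{\mathfrak J}^1$ really decompose along $V=\bigoplus_jW^{(j)}$, which is precisely where the full force of "properly/exactly subordinate" enters and where the $GL$-arguments of \cite[(7.1)]{BK1} must be carried through Stevens's framework \cite[\S5]{St2}. Once this is established, taking $\sigma$-fixed points in (i) and the factor-of-two bookkeeping in (ii) are routine.
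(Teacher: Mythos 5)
Your argument is correct, but it is worth pointing out that the paper's own ``proof'' of this proposition is a single line: it is quoted verbatim as a direct consequence of Corollary 5.11 of Stevens, \emph{The supercuspidal representations of $p$-adic classical groups} (reference [St2]), with the group-theoretic decompositions in (i) coming from the surrounding results of [St2, \S 5] on properly subordinate decompositions. What you have written is essentially a reconstruction of the proof of that cited corollary: the block-diagonal decomposition of $\widetilde{\mathfrak{H}}^1$ and $\widetilde{\mathfrak{J}}^1$ along a properly subordinate decomposition, the passage to $\sigma$-fixed points identifying the $j$ and $-j$ blocks, the factorization of a $\sigma$-invariant semisimple character over the blocks, and the resulting square $(\widetilde{\theta}^{(j)})^2$ on the $j\neq 0$ factors together with its reinterpretation as a semisimple character for the stratum with $2\beta'$ --- all of this is exactly Stevens's argument (itself modelled on Bushnell--Kutzko (7.1.19)), and your bookkeeping with $\widetilde{\theta}^{(-j)}\circ\sigma=\widetilde{\theta}^{(j)}$ is right. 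Two minor caveats: the restricted strata $[\Lambda^{(j)},n_j,0,\beta^{(j)}]$ for $j\neq 0$ may be null rather than simple (this does not affect anything), and the two steps you flag as the ``main obstacle'' (the decomposition of $\widetilde{\mathfrak{H}}^1,\widetilde{\mathfrak{J}}^1$ and the identification of $\theta\mapsto\theta^2$ as a transfer from $\widetilde{\mathcal{C}}(\Lambda^{(j)},0,\beta')$ to $\widetilde{\mathcal{C}}(\Lambda^{(j)},0,2\beta')$) are precisely the nontrivial content that the paper outsources to [St2]; a self-contained write-up would need to carry those through Stevens's inductive definition of semisimple characters in detail, as you indicate. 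So your proposal buys a self-contained proof at the cost of redoing [St2, \S 5], whereas the paper simply invokes it.
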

\begin{proof}
This is a direct consequence of \cite[Corollary 5.11]{St2}.
\end{proof}

From Proposition 4.1, we can form the group
\begin{equation*}
J_P^1 = H^1(\beta,\Lambda)(J^1(\beta,\Lambda) \cap P).
\end{equation*}
Let $\theta \in \mathcal{C}_-(\Lambda,0,\beta)$, and $\eta$ be the unique irreducible representation of $J^1(\beta,\Lambda)$ containing $\theta$.
Denote by $\eta_P$ the natural representation of $J_P^1$ on the subspace of $(J^1(\beta,\Lambda) \cap U)$-fixed vectors in the space of $\eta$.
Then, by \cite[Lemma 5.12]{St2}, we have $J_P^1 \cap M = J^1(\beta,\Lambda) \cap M$ and
\begin{equation*}
\displaystyle \eta_P \vert J^1(\beta,\Lambda) \cap M \simeq \eta^{(0)} \otimes \bigotimes_{j=1}^m \widetilde{\eta}^{(j)},
\end{equation*}
where $\eta^{(0)}$ is a unique irreducible representation of $J^1(\beta,\Lambda^{(0)})$ containing $\theta^{(0)}$, and $\widetilde{\eta}^{(j)}$ is a unique irreducible representation of $\widetilde{J}^1(\beta',\Lambda^{(j)}) = \widetilde{J}^1(2\beta',\Lambda^{(j)})$ containing $(\widetilde{\theta}^{(j)})^2$.

We define compact open subgroups $J_P$ and $J^+_P$ of $G$ and $G^+$ by
\begin{equation*}
J_P = H^1(\beta,\Lambda)(J(\beta,\Lambda) \cap P),\ J^+_P(\beta,\Lambda) = H^1(\beta,\Lambda)(J^+(\beta, \Lambda) \cap P^+),
\end{equation*}
respectively. Let $\kappa$ be a $\beta$-extension of $\eta$ to $J^+(\beta,\Lambda)$, and $\kappa_P$ the natural representation of $J_P^+$ on the space of $(J^+(\beta,\Lambda) \cap U) = (J^1(\beta,\Lambda) \cap U)$-fixed vectors in $\kappa$. We also denote by $\kappa_P$ the restriction of $\kappa_P$ to $J_P$. Then from \cite[Proposition 6.1]{St2}, $\kappa_P \vert J_P^1 = \eta_P$ and $\kappa_P$ is irreducible.

\begin{prop}      
The representation $\kappa_P$ of $J_P$ satisfies the following conditions:
\begin{enumerate}
  \item $\mathrm{Ind}_{J_P}^{J(\beta,\Lambda)} \kappa_P \simeq \kappa \vert J(\beta,\Lambda)$,
  \item there exist irreducible representations $\kappa^{(0)}$ of $J(\beta,\Lambda^{(0)})$ extending  $\eta^{(0)}$, and $\widetilde{\kappa}^{(j)}$ of $\widetilde{J}(\beta',\Lambda^{(j)})$ extending $\widetilde{\eta}^{(j)}$, for $1 \le j \le m$, $($cf. Proposition 6.5$)$ such that
  \begin{equation*}
  \displaystyle \kappa_P \vert J_P \cap M \simeq \kappa^{(0)} \otimes \bigotimes_{j=1}^m \widetilde{\kappa}^{(j)}.
  \end{equation*}
\end{enumerate}
\end{prop}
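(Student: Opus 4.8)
The plan is to obtain both statements by transporting, via the compact induction functor, the analogous properties already available at the level of $\kappa$ and $\eta$, using the Iwahori decomposition of Proposition 4.1. For part (1), I would start from the definition of $\kappa_P$ as the representation of $J_P^+$ on the space of $(J^1(\beta,\Lambda)\cap U)$-fixed vectors in $\kappa$, restricted to $J_P$. Since $J_P = H^1(\beta,\Lambda)(J(\beta,\Lambda)\cap P)$ and $J(\beta,\Lambda)$ has an Iwahori decomposition with respect to $(M,P)$ by Proposition 4.1, one has $J(\beta,\Lambda) = (J(\beta,\Lambda)\cap U^-) J_P$ (after absorbing $H^1\cap U^-$ appropriately, using that $H^1(\beta,\Lambda)\subset J(\beta,\Lambda)$ and $J^1\cap U^- \subset H^1\cap U^-$ fails in general, so one must be a little careful here — see below). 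Granting the decomposition of $J(\beta,\Lambda)$ into $(J\cap U^-)$ and $J_P$, Mackey theory together with the fact that $\eta_P$ is exactly the $(J^1\cap U)$-fixed subspace gives that $\mathrm{Ind}_{J_P}^{J(\beta,\Lambda)}\kappa_P$ and $\kappa|J(\beta,\Lambda)$ have the same dimension; one then checks they carry the same $J_P$-action on the natural subspace, or invokes the uniqueness in Proposition 3.5 applied on restriction to $J^1(\beta,\Lambda)$, where $\mathrm{Ind}_{J_P^1}^{J^1}\eta_P \simeq \eta$ is the corresponding statement for $\eta$ (this is \cite[Proposition 6.1 and Lemma 5.12]{St2}, and $\kappa_P|J_P^1 = \eta_P$ is already recorded).

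For part (2), I would restrict the isomorphism $\kappa_P|J_P^1 = \eta_P$ and $\eta_P|J^1(\beta,\Lambda)\cap M \simeq \eta^{(0)}\otimes\bigotimes_j\widetilde\eta^{(j)}$ (already displayed before Proposition 4.3) to analyze $\kappa_P|J_P\cap M$. By Proposition 4.1 and \cite[Lemma 5.12]{St2} we have $J_P\cap M = J(\beta,\Lambda)\cap M$, which under Proposition 4.2(i) is isomorphic to $J(\beta,\Lambda^{(0)})\times\prod_{j=1}^m\widetilde J(\beta',\Lambda^{(j)})$. So $\kappa_P|J_P\cap M$ is an irreducible representation of this product group whose restriction to the pro-$p$ subgroup $J^1(\beta,\Lambda)\cap M$ is $\eta^{(0)}\otimes\bigotimes_j\widetilde\eta^{(j)}$. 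Since a tensor product of irreducibles of the factors is irreducible, and since $\kappa_P|J_P\cap M$ is irreducible of the same dimension, it must itself be a tensor product $\kappa^{(0)}\otimes\bigotimes_j\widetilde\kappa^{(j)}$ of irreducible representations of the factor groups; restricting back to $J^1(\beta,\Lambda^{(0)})$ and to each $\widetilde J^1(\beta',\Lambda^{(j)})$ shows that $\kappa^{(0)}$ extends $\eta^{(0)}$ and $\widetilde\kappa^{(j)}$ extends $\widetilde\eta^{(j)}$. (That each $\widetilde\kappa^{(j)}$ may be taken to be a $\beta'$-extension in the $GL$ sense is the content of the forward reference to Proposition 6.5 and need not be proved here; for the present statement it suffices that it extends $\widetilde\eta^{(j)}$.)

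I expect the main obstacle to be the bookkeeping in part (1): making precise the decomposition $J(\beta,\Lambda) = (J(\beta,\Lambda)\cap U^-)\,J_P$ and checking that the natural map realizing $\kappa_P$ as the $(J^1\cap U)$-fixed vectors is compatible with the induction, i.e. that no extra multiplicity or twist appears. The cleanest route is probably not to argue Mackey-theoretically from scratch but to quote the $GL$-type prototype: \cite[Proposition 6.1]{St2} already establishes $\eta_P = \mathrm{res}\,(\text{ind})$ compatibilities for $\eta$, and since $\kappa_P$ and $\kappa$ restrict to $\eta_P$ and $\eta$ on the relevant $J^1$'s while both $\mathrm{Ind}_{J_P}^{J(\beta,\Lambda)}\kappa_P$ and $\kappa|J(\beta,\Lambda)$ have the same central character data, an application of the uniqueness part of Proposition 3.5 (more precisely, the fact that an irreducible representation of $J(\beta,\Lambda)$ containing a fixed $\eta$ on $J^1$ is determined once one specifies its restriction to $J(\beta,\Lambda)\cap M$, cf.\ \cite[Proposition 6.1]{St2}) pins down $\mathrm{Ind}_{J_P}^{J(\beta,\Lambda)}\kappa_P \simeq \kappa|J(\beta,\Lambda)$. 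Parts (1) and (2) are in fact mildly intertwined, since (2) for $\kappa_P$ combined with the $GL$-theoretic induction identity for each $\widetilde\kappa^{(j)}$ is what feeds back into (1); I would prove (2) first and then deduce (1), rather than the order stated.
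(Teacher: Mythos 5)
Your argument is essentially correct and coincides with the standard one: the paper offers no independent proof, merely citing the discussion below \cite[Proposition 5.13]{St2}, where precisely this combination of the Iwahori decomposition $J(\beta,\Lambda)=(J(\beta,\Lambda)\cap U^-)J_P$, the dimension count coming from $\mathrm{Ind}_{J_P^1}^{J^1(\beta,\Lambda)}\eta_P\simeq\eta$, Frobenius reciprocity against the irreducible representation $\kappa\vert J(\beta,\Lambda)$, and the factorization of irreducible representations of the product $J_P\cap M\simeq J(\beta,\Lambda^{(0)})\times\prod_{j=1}^m\widetilde J(\beta',\Lambda^{(j)})$ is carried out. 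Two minor points: your worry about absorbing $H^1\cap U^-$ is unnecessary, since $J(\beta,\Lambda)=(J(\beta,\Lambda)\cap U^-)J_P$ follows at once from Proposition 4.1 because $J_P\supset(J\cap M)(J\cap U)$; and deducing (1) from (2) is a detour, as the two parts are logically independent.
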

\begin{proof}
A proof of the proposition can be found below \cite[Proposition 5.13]{St2}.
\end{proof}

There exist natural isomorphisms
\begin{equation*}
J_P/J_P^1 \simeq J(\beta,\Lambda)/J^1(\beta,\Lambda) \simeq P(\Lambda_{\mathfrak{o}_E})/P_1(\Lambda_{\mathfrak{o}_E}),
\end{equation*}
and, moreover, this quotient is isomorphic to
\begin{equation*}
\displaystyle P(\Lambda^{(0)}_{\mathfrak{o}_E})/P_1(\Lambda^{(0)}_{\mathfrak{o}_E}) \times \prod_{j=1}^m \widetilde{P}(\Lambda_{\mathfrak{o}_{E'}}^{(j)})/\widetilde{P}_1(\Lambda^{(j)}_{\mathfrak{o}_{E'}}).
\end{equation*}
Put $\overline{G}^{(0)} = P(\Lambda^{(0)}_{\mathfrak{o}_E})/P_1(\Lambda^{(0)}_{\mathfrak{o}_E}).$
Then $\overline{G}^{(0)}$ is (the group of rational points of) a reductive algebraic group defined over $k_{F_0}$, and is not always connected. We denote by $\overline{G}^\mathrm{o}$ the identity component of $\overline{G}^{(0)}$. 
Then we have $\overline{G}^\mathrm{o} = P^\mathrm{o}(\Lambda^{(0)}_{\mathfrak{o}_E})/P_1(\Lambda^{(0)}_{\mathfrak{o}_E})$. 
From (2.2) and the fact that $V = \bigoplus_{j=-m}^m W^{(j)}$ is exactly subordinate to $[\Lambda,n,0,\beta]$, the quotient $\widetilde{P}(\Lambda_{\mathfrak{o}_{E'}}^{(j)})/\widetilde{P}_1(\Lambda_{\mathfrak{o}_{E'}}^{(j)})$ is isomorphic to $GL(f,k_{E'})$. Via these isomorphisms, we identify
\begin{equation}
J_P/J_P^1 = J(\beta,\Lambda)/J^1(\beta,\Lambda) = P(\Lambda_{\mathfrak{o}_E})/P_1(\Lambda_{\mathfrak{o}_E}) = \overline{G}^{(0)} \times GL(f,k_{E'})^m.
\end{equation}

Let $\tau$ be an irreducible smooth representation of $J(\beta,\Lambda)$ trivial on $J^1(\beta,\Lambda)$. Then $\tau$ is the inflation to $J(\beta,\Lambda)$ of the representation
\begin{equation*}
\displaystyle \overline{\tau}_0 \otimes \bigotimes_{j=1}^m \overline{\widetilde{\tau}}^{(j)},
\end{equation*}
of $J(\beta,\Lambda)/J^1(\beta,\Lambda)$, where $\overline{\tau}_0$ and $\overline{\widetilde{\tau}}^{(j)}$ are representations of $\overline{G}^{(0)}$ and $GL(f,k_{E'})$ which are isomorphic to $J(\beta,\Lambda^{(0)})/J^1(\beta,\Lambda^{(0)})$ and $\widetilde{J}(\beta',\Lambda^{(j)})/\widetilde{J}^1(\beta',\Lambda^{(j)})$,
respectively, for $1 \le j \le m$.

From a $\beta$-extension $\kappa$ as above and $\tau$, we define a smooth representation $\lambda$ of $J(\beta,\Lambda)$ by
\begin{equation*}
\lambda = \kappa \otimes \tau.
\end{equation*}
From (4.1), we can regard $\tau$ as an irreducible smooth representation of $J_P$ trivial on $J^1_P$.

\begin{prop} $($\cite[Lemma 6.1]{St2}$)$     
Let $\lambda_P$ be the natural representation of the group $J_P = H^1(\beta,\Lambda)(J(\beta,\Lambda \cap P)$ on the space of $(J^1(\beta,\Lambda) \cap U)$-fixed vectors in $\lambda$. Then
\begin{enumerate}
  \item $\lambda_P$ is irreducible and $\mathrm{Ind}_{J_P}^{J(\beta,\Lambda)} \lambda_P \simeq \lambda$,
  \item $\lambda_P \simeq \kappa_P \otimes \tau$,
  \item letting $\lambda^{(0)} = \kappa^{(0)} \otimes \tau_0$ and $\widetilde{\lambda}^{(j)} = \widetilde{\kappa}^{(j)} \otimes \widetilde{\tau}^{(j)}$, for $1 \le j \le m$, we have
  \begin{equation*}
  \displaystyle \lambda_P \vert J_P \cap M \simeq \lambda^{(0)} \otimes \bigotimes_{j=1}^m \widetilde{\lambda}^{(j)},
  \end{equation*}
  where $\tau_0$ and $\widetilde{\tau}^{(j)}$ are representations of $J(\beta,\Lambda^{(0)})$ and $\widetilde{J}(\beta',\Lambda^{(j)})$, which inflate $\overline{\tau}_0$ and $\overline{\widetilde{\tau}}^{(j)}$ above, respectively.
\end{enumerate}
\end{prop}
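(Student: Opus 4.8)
The plan is to obtain all three assertions from the corresponding statements about $\kappa_P$ in Proposition 4.3, exploiting only that $\tau$ is inflated from $J(\beta,\Lambda)/J^1(\beta,\Lambda)$ and hence is trivial on $J^1(\beta,\Lambda)\cap U$. First I would note that $J_P=H^1(\beta,\Lambda)(J(\beta,\Lambda)\cap P)$ is a subgroup of $J(\beta,\Lambda)$ with $J_P\cap J^1(\beta,\Lambda)=J_P^1$, so that, through the identification $(4.1)$, the representation $\tau$ of $J_P$ trivial on $J_P^1$ used in the statement is precisely the restriction $\tau|_{J_P}$ of $\tau$ from $J(\beta,\Lambda)$. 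Writing $V_\kappa$ and $V_\tau$ for the underlying spaces, the triviality of $\tau$ on $J^1(\beta,\Lambda)$ means $J^1(\beta,\Lambda)\cap U$ acts on $V_\kappa\otimes V_\tau$ through the first factor only, so the space of $(J^1(\beta,\Lambda)\cap U)$-fixed vectors is $V_\kappa^{J^1(\beta,\Lambda)\cap U}\otimes V_\tau=V_{\kappa_P}\otimes V_\tau$; by the construction of $\kappa_P$ (cf. \cite[Proposition 6.1]{St2}) the group $J_P$ preserves this subspace and acts on it by $\kappa_P\otimes\tau|_{J_P}$. This gives (2), namely $\lambda_P\simeq\kappa_P\otimes\tau$.

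For (1), I would first record that $\lambda_P=\kappa_P\otimes\tau$ is irreducible: $\kappa_P$ is irreducible with $\kappa_P|_{J_P^1}=\eta_P$, where $\eta_P$ is the unique irreducible representation of $J_P^1$ inducing $\eta$, and $\tau$ is irreducible and trivial on $J_P^1$, so a standard intertwining computation (cf. \cite[(5.3.2)]{BK1}) applies. Next, Proposition 4.3(1) gives $\mathrm{Ind}_{J_P}^{J(\beta,\Lambda)}\kappa_P\simeq\kappa|_{J(\beta,\Lambda)}$, and so by the projection formula
\begin{equation*}
\mathrm{Ind}_{J_P}^{J(\beta,\Lambda)}\lambda_P\simeq\mathrm{Ind}_{J_P}^{J(\beta,\Lambda)}\bigl(\kappa_P\otimes\tau|_{J_P}\bigr)\simeq\bigl(\mathrm{Ind}_{J_P}^{J(\beta,\Lambda)}\kappa_P\bigr)\otimes\tau\simeq\kappa|_{J(\beta,\Lambda)}\otimes\tau=\lambda .
\end{equation*}

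For (3), since $J(\beta,\Lambda)\cap M\subseteq J(\beta,\Lambda)\cap P\subseteq J_P\subseteq J(\beta,\Lambda)$ we get $J_P\cap M=J(\beta,\Lambda)\cap M$, which by Proposition 4.2 is $J(\beta,\Lambda^{(0)})\times\prod_{j=1}^m\widetilde{J}(\beta',\Lambda^{(j)})$. Restricting the isomorphism of (2) to this subgroup, I would use Proposition 4.3(2), which gives $\kappa_P|_{J_P\cap M}\simeq\kappa^{(0)}\otimes\bigotimes_{j=1}^m\widetilde{\kappa}^{(j)}$, together with the fact that $\tau|_{J_P\cap M}$ is the inflation $\tau_0\otimes\bigotimes_{j=1}^m\widetilde{\tau}^{(j)}$; regrouping the tensor factors according to the factors of $M$ then yields $\lambda_P|_{J_P\cap M}\simeq\lambda^{(0)}\otimes\bigotimes_{j=1}^m\widetilde{\lambda}^{(j)}$ with $\lambda^{(0)}=\kappa^{(0)}\otimes\tau_0$ and $\widetilde{\lambda}^{(j)}=\widetilde{\kappa}^{(j)}\otimes\widetilde{\tau}^{(j)}$.

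The only points requiring care — that $J_P$ really does preserve the space of $(J^1(\beta,\Lambda)\cap U)$-fixed vectors in $\lambda$, and that $\tau|_{J_P\cap M}$ decomposes as stated — are bookkeeping consequences of the Iwahori decompositions in Proposition 4.1 and of the compatibilities already recalled from \cite{St2}; no new idea is needed, and the whole content is the transport of Proposition 4.3 across the tensor product with $\tau$, so I do not expect a genuine obstacle.
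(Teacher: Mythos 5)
Your argument is correct. Note that the paper offers no proof of this proposition at all: it is stated with the citation \cite[Lemma 6.1]{St2} and the burden is carried entirely by Stevens's lemma. Your derivation --- identifying the $(J^1(\beta,\Lambda)\cap U)$-fixed vectors in $V_\kappa\otimes V_\tau$ with $V_{\kappa_P}\otimes V_\tau$ because $\tau$ is trivial on $J^1(\beta,\Lambda)$, then transporting Proposition 4.3 across the tensor product via the projection formula and the equality $J_P\cap M=J(\beta,\Lambda)\cap M$ --- is exactly the standard argument behind that citation (it is the classical-group analogue of \cite[(7.2.15)--(7.2.17)]{BK1}), so you have in effect supplied the omitted proof rather than diverged from it. The two points you flag as ``bookkeeping'' (that $J_P$ stabilizes the fixed subspace, and the factorization of $\tau|_{J_P\cap M}$) do indeed reduce to the Iwahori decomposition of Proposition 4.1 and the identification (4.1), so no gap remains.
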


\begin{defn}     
A representation $\lambda_P = \kappa_P \otimes \tau$ of $J_P$ as above is called a {\it simple type} in $G$, if $\tau$ satisfies the following conditions:
\begin{enumerate}
  \item $\overline{\tau}_0$ is an irreducible representation of $\overline{G}^{(0)}$ containing an irreducible cuspidal representation of $\overline{G}^\mathrm{o}$,
  \item $\overline{\widetilde{\tau}}^{(j)}$ is an irreducible cuspidal representation of $GL(f,k_{E'})$, for $1 \le j \le m$,
  \item $\overline{\widetilde{\tau}}^{(1)} \simeq \cdots \simeq \overline{\widetilde{\tau}}^{(m)}$.
\end{enumerate}
\end{defn}

\section{Self-dual simple types}       

Let $[\Lambda,n,0,\beta]$ be a good skew semisimple stratum in $A$ with splitting $V = \bigoplus_{i=1}^{\ell+1} V_i,\ \beta = \sum_{i=1}^{\ell+1} \beta_i$, defined in section 2. Let $\beta' = \beta_{\ell+1}$, $E_i = F[\beta_i]$, for $1 \le i \le \ell$, $E' = E_{\ell+1} = F[\beta']$, and $E = \bigoplus_{i=1}^{\ell+1} E_i$. Let $B$ be the $A$-centralizer of $\beta$, and $G_E = B \cap G$, as before. Then we have
\begin{equation*}
\displaystyle G_E = \prod_{j=1}^{\ell+1} G_{E_i},
\end{equation*}
where $G_{E_i}$ is (the group of rational points of) the restriction of scalars to $F_0$ of the connected unitary group of $(V^i, f_i)$, for $1 \le i \le \ell$, which is defined in section 2 (cf. \cite[p.299]{St2}).

Let $V = \bigoplus_{j=-m}^m W^{(j)}$ be the self-dual decomposition of $V$ in Definition 2.1.
Let $j > 0$. We take an (ordered) $\mathfrak{o}_{E'}$-basis $\{v_{j,1},\cdots,v_{j,f}\}$ of the lattice $\Lambda^{(j)}(0) = \Lambda(0) \cap W^{(j)}$ in $W^{(j)}$ such that it {\it splits} the lattice sequence $\Lambda^{(j)} = \Lambda \cap W^{(j)}$ (see \cite[Defnition 2.3]{St2}), and denote it by $\mathcal{B}^{(j)}$. As in \cite[6.2]{St2}, we take an (ordered) $E'$-basis $\mathcal{B}^{(-j)} = \{v_{-j,1},\cdots,v_{-j,f}\}$ for $W^{(-j)}$ that satisfies $f_{\ell+1}(v_{-j,s},v_{j,t}) = \varpi_{E'}\delta_{s,t}$, where $\overline{\varpi}_{E'}$ is a uniformizer of $E'$ which satisfies $\overline{\varpi}_{E'} = (-1)^{e(E' \vert E_0') - 1}\varpi_{E'}$ and $\delta_{s,t}$ denotes the Kronecker delta. 
We also choose a self-dual $\mathfrak{o}_E$-basis of the lattice $\Lambda^{(0)}(0) = \Lambda(0) \cap W^{(0)}$ for $W^{(0)}$ that splits the lattice sequence $\Lambda^{(0)} = \Lambda \cap W^{(0)}$, and denote it by $\mathcal{B}^{(0)}$.

Following \cite[6.2]{St2} again, we define Weyl group elements of $G_E$: For $j, k \ne 0,\ -m \le j, k \le m$, define $I_{j,k} \in B' = \mathrm{End}_{E'}(V')$ by $I_{j,k}(v_{k,s}) = v_{j,s}\ (1 \le s \le f),\ I_{j,k}(v_{\ell,s}) = 0\ (\ell \ne k)$, where $V' = \bigoplus_{j=-m, j \ne 0}^m W^{(j)}$.
For $1 \le j, k \le m$, we let $s_{j,k}, s_j$, and $s_j^{\varpi}$ be the elements defined in \cite[p.333]{St2}.
Then these elements belong to $B'$. Moreover we have $s_{j,k}, s_j, s_j^{\varpi} \in G^+_E = B \cap G^+$. In particular, the elements $s_j$ and $s_j^{\varpi}$ exchange the blocks $\Vec{e}^{(j)}A\Vec{e}^{(j)}$ and $\Vec{e}^{(-j)}A\Vec{e}^{(-j)}$, where $\Vec{e}^{(j)}$ denotes the projection $V \to W^{(j)}$ with kernel $\bigoplus_{k \ne j}W^{(k)}$.

For $1 \le j \le m$, we define an involution $\sigma_j$ on $\widetilde{G}^{(j)} = \mathrm{Aut}_F(W^{(j)})$ by using $s_j$ as follows: Identifying $\widetilde{G}^{(j)} = \{(\overline{g}^{-1},g) \in \widetilde{G}^{(j)} \times \widetilde{G}^{(-j)}\}$, we set
\begin{equation*}
\sigma_j(g) = s_jg(s_j)^{-1}\ (g \in \widetilde{G}^{(j)}).
\end{equation*}

\begin{prop}          
Let $[\Lambda,n,0,\beta]$ be a good skew semisimple stratum in $A$, and $(J_P,\lambda_P)$ a simple type in $G$ associated to it with $\lambda_P = \kappa_P \otimes \tau$. Let 
\begin{center}
$\kappa_P \vert J_P \cap M =\kappa^{(0)} \otimes \bigotimes_{j=1}^m \widetilde{\kappa}^{(j)}$ and\ 
$\lambda_P \vert J_P \cap M = \lambda^{(0)} \otimes \bigotimes_{j=1}^m \widetilde{\lambda}^{(j)}$.
\end{center}
Then we have
\begin{enumerate}
 \item $\widetilde{J}(\beta',\Lambda^{(1)}) \simeq \cdots \simeq \widetilde{J}(\beta',\Lambda^{(m)})$ and $\widetilde{J}(\beta',\Lambda^{(j)})$ is $\sigma_j$-stable, for $1 \le j \le m$,
 \item $\widetilde{\kappa}^{(1)} \simeq \cdots \simeq \widetilde{\kappa}^{(m)}$ and $\widetilde{\kappa}^{(j)} \circ \sigma_j \simeq \widetilde{\kappa}^{(j)}$, for $1 \le j \le m$,
 \item $\kappa^{(0)}$ is a $\beta$-extension of $\eta^{(0)}$, and $\widetilde{\kappa}^{(j)}$ is a 2$\beta'$-extension of $\widetilde{\eta}^{(j)}$, for $1 \le j \le m$,
 \item $\widetilde{\lambda}^{(1)} \simeq \cdots \simeq \widetilde{\lambda}^{(m)}$.
\end{enumerate}
\end{prop}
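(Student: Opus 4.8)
The plan is to exploit the highly symmetric way in which the data $(W^{(j)}, \Lambda^{(j)}, \beta')$ for $j \neq 0$ have been set up, together with the compatibility built into the construction of $\kappa$ as a $\beta$-extension. For part (1), I would start from the bases $\mathcal{B}^{(j)}$, $\mathcal{B}^{(-j)}$ chosen in Section 5. Since each $W^{(j)}$ ($j \neq 0$) is an $E'$-space of dimension $f$ lying inside $V^{\ell+1}$, and since $\Lambda^{(j)}$ is the restriction of the single lattice sequence $\Lambda$, the elements $I_{j,k}$ of Section 5 give $E'$-linear isomorphisms $W^{(k)} \to W^{(j)}$ carrying the $\mathfrak{o}_{E'}$-basis $\mathcal{B}^{(k)}$ to $\mathcal{B}^{(j)}$; because these bases split the respective lattice sequences, $I_{j,k}$ identifies $\Lambda^{(k)}$ with $\Lambda^{(j)}$ as $\mathfrak{o}_{E'}$-lattice sequences in $W^{(k)}\cong W^{(j)}$, and conjugation by $I_{j,k}$ carries $\beta'|_{W^{(k)}}$ to $\beta'|_{W^{(j)}}$. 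Since the groups $\widetilde{J}(\beta',\Lambda^{(j)})$ are functorially attached to the triple $(\beta',\Lambda^{(j)},W^{(j)})$ (via the filtrations $\widetilde{\mathfrak{J}}^t$ of Section 3, applied inside $\mathrm{End}_F(W^{(j)})$), the isomorphism $I_{j,k}$ induces $\widetilde{J}(\beta',\Lambda^{(k)}) \simeq \widetilde{J}(\beta',\Lambda^{(j)})$, proving the first assertion of (1). For $\sigma_j$-stability, recall from Section 5 that $s_j$ exchanges the blocks $\Vec{e}^{(j)}A\Vec{e}^{(j)}$ and $\Vec{e}^{(-j)}A\Vec{e}^{(-j)}$, and that $\Lambda^{(-j)} = (\Lambda^{(j)})^{\#}$ by self-duality; one checks that conjugation by $s_j$ sends $\widetilde{\mathfrak{J}}^t(\beta',\Lambda^{(j)})$ to $\widetilde{\mathfrak{J}}^t(-\beta',\Lambda^{(-j)})$, and, using the identification $\widetilde{G}^{(j)} = \{(\overline{g}^{-1},g)\}$, that $\sigma_j$ preserves $\widetilde{J}(\beta',\Lambda^{(j)})$. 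This is essentially the content of the set-up in \cite[6.2]{St2}, so I would cite it where possible.

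Part (3) is mostly a bookkeeping statement once (1) is in place: by Proposition 4.3(ii) the $\widetilde{\kappa}^{(j)}$ arise as the tensor factors of $\kappa_P|J_P\cap M$, and under the isomorphism $\widetilde{J}^1(\beta',\Lambda^{(j)}) = \widetilde{J}^1(2\beta',\Lambda^{(j)})$ noted after Proposition 4.2, $\widetilde{\eta}^{(j)}$ is the unique irreducible representation containing $(\widetilde{\theta}^{(j)})^2 \in \widetilde{\mathcal{C}}(\Lambda^{(j)},0,2\beta')$; that $\widetilde{\kappa}^{(j)}$ extends $\widetilde{\eta}^{(j)}$ and satisfies the $\beta$-extension property relative to the transferred maximal order is how the decomposition of $\kappa$ is constructed in \cite[Proposition 5.13]{St2}, so one reads it off, with the normalization giving a $2\beta'$-extension rather than a $\beta'$-extension. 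Similarly $\kappa^{(0)}$ is by construction a $\beta$-extension of $\eta^{(0)}$.

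The main work is part (2), and within it the isomorphism $\widetilde{\kappa}^{(j)}\circ\sigma_j \simeq \widetilde{\kappa}^{(j)}$; the statement $\widetilde{\kappa}^{(1)}\simeq\cdots\simeq\widetilde{\kappa}^{(m)}$ follows from (1) together with the transfer of semisimple characters $\tau_{\Lambda,\Lambda^{(j)},\beta}$, since the $\theta^{(j)}$ are the restrictions of one $\theta$ and the $I_{j,k}$ intertwine them, so uniqueness in Proposition 3.1 forces the $\widetilde{\eta}^{(j)}$, hence the compatibly-chosen $\widetilde{\kappa}^{(j)}$, to match. For the $\sigma_j$-invariance I would argue as follows. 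First, $\widetilde{\eta}^{(j)}\circ\sigma_j$ is the unique irreducible representation of $\widetilde{J}^1(\beta',\Lambda^{(j)})$ containing $(\widetilde{\theta}^{(j)})^2\circ\sigma_j$; but $\theta\in\mathcal{C}_-(\Lambda,0,\beta)$ is $\sigma$-fixed by definition, and restricting this invariance to $\widetilde{H}^1(\beta',\Lambda^{(j)})$ via Proposition 4.2 and the action of $s_j$ shows $(\widetilde{\theta}^{(j)})^2\circ\sigma_j \simeq (\widetilde{\theta}^{(j)})^2$, whence $\widetilde{\eta}^{(j)}\circ\sigma_j\simeq\widetilde{\eta}^{(j)}$. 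It then remains to promote this from the level of $\widetilde{\eta}^{(j)}$ to the level of $\widetilde{\kappa}^{(j)}$: because $\kappa$ is a $\beta$-extension relative to a $\sigma$-stable maximal order $\Lambda^{\mathsf{M}}$ and is compatible with $\kappa_{\mathsf{M}}$ (which extends a representation of $J^+_{\mathsf{M}} = \widetilde{J}(\beta,\Lambda^{\mathsf{M}})\cap G^+$, already a group containing the relevant Weyl elements), the self-duality of $\kappa$ — i.e.\ that $\kappa$ descends to $J^+$, not merely $J$ — forces its tensor factors to be individually invariant under the involutions $\sigma_j$ coming from the $s_j\in G^+_E$. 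Making this last implication precise — tracking how $\sigma$-invariance of the global $\beta$-extension $\kappa$ distributes over the factors $\kappa^{(0)}\otimes\bigotimes_j\widetilde{\kappa}^{(j)}$ of its restriction to $J_P\cap M$, and checking that the permutation part of $\sigma$ on the $W^{(j)}$ has been absorbed into the identification $\widetilde{G}^{(j)}\simeq GL(N/m,F)$ so that only the $\sigma_j$ remain — is the step I expect to be the real obstacle, and the one requiring the most careful appeal to the explicit constructions in \cite[\S 4, \S 5]{St2}. Part (4) is then immediate: $\widetilde{\lambda}^{(j)} = \widetilde{\kappa}^{(j)}\otimes\widetilde{\tau}^{(j)}$ by Proposition 4.4(iii), the $\widetilde{\kappa}^{(j)}$ agree by (2), and the $\widetilde{\tau}^{(j)}$ inflate the $\overline{\widetilde{\tau}}^{(j)}$, which are isomorphic by Definition 4.5(3).
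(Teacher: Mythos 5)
Your proposal and the paper's proof diverge in an instructive way: the paper proves parts (1)--(3) purely by citation, observing that since the decomposition $V=\bigoplus_j W^{(j)}$ is exactly subordinate (Definition 2.1), these are precisely \cite[Lemma 6.9, Corollary 6.10, Proposition 6.3]{St2}, and then derives (4) from (2) together with condition (3) of Definition 4.5 exactly as you do. You instead set out to reconstruct the content of those three cited results from the explicit data ($I_{j,k}$, the split bases $\mathcal{B}^{(\pm j)}$, the elements $s_j$). Your reconstructions of (1) and (3) are sound in outline and do track Stevens's arguments. The problem is part (2): the assertion $\widetilde{\kappa}^{(j)}\circ\sigma_j\simeq\widetilde{\kappa}^{(j)}$ is the substantive content of \cite[Corollary 6.10]{St2}, and you explicitly leave it unfinished, proposing that ``self-duality of $\kappa$'' (its extension to $J^+$) should force $\sigma_j$-invariance of the individual tensor factors of $\kappa_P\vert J_P\cap M$. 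That mechanism is not how the invariance is actually obtained, and it is not clear it can be made to work as stated: the passage from a representation of $J^+$ to the factors of the restriction to $J_P\cap M$ does not by itself control each $\widetilde{\kappa}^{(j)}$ separately. The argument in \cite{St2} instead uses that $s_j$ lies in $G_E^+$, normalizes the relevant groups, intertwines the $\eta$'s (which you do establish at the level of $\widetilde{\eta}^{(j)}$ via the $\sigma$-invariance of $\theta$), and then invokes the uniqueness characterization of $\beta$-extensions through their intertwining to promote the invariance from $\widetilde{\eta}^{(j)}$ to $\widetilde{\kappa}^{(j)}$. So relative to the paper your proposal is not wrong, but it replaces a citation with a partial reproof whose hardest step is acknowledged rather than closed; to finish it you should either complete that step along the lines just indicated or simply cite \cite[Corollary 6.10]{St2} as the paper does.
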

\begin{proof}
By definition 2.1, parts (1), (2), and (3) follow directly from \cite[Lemma 6.9, Corollary 6.10, and Proposition 6.3]{St2}, respectively. For part (4), we have $\widetilde{\lambda}^{(j)} = \widetilde{\kappa}^{(j)} \otimes \widetilde{\tau}^{(j)}$ by Proposition 4.4. Thus part (2) and Definition 4.5 show part (4). The proof is complete.
\end{proof}

\begin{defn} $\mathrm{(Selfdual\ simple\ type)}$     
Let $(J_P,\lambda_P)$ be a simple type in $G$ attached to a good skew semisimple stratum $[\Lambda,n,0,\beta]$ in $A$ with $\lambda_P = \kappa_P \otimes \tau$. The simple type $(J_P,\lambda_P)$ in $G$ is called {\it self-dual} if the representation $\widetilde{\tau}^{(j)}$ in Proposition 4.4 satisfies $\widetilde{\tau}^{(j)} \circ \sigma_j \simeq \widetilde{\tau}^{(j)}$, for $1 \le j \le m$.
\end{defn}

\begin{prop}         
Let $(J_P,\lambda_P)$ be a simple type in $G$ attached to a good skew semisimple stratum $[\Lambda,n,0,\beta]$ with $\lambda_P = \kappa_P \otimes \tau$. Let $\lambda^{(0)}, \widetilde{\lambda}^{(j)}$ be as in Proposition 5.2 for $\lambda_P \vert J_P \cap M$. Then $\lambda^{(0)}$ and $\widetilde{\lambda}^{(j)}$ are maximal simple types in $G^{(0)}$ and in $\widetilde{G}^{(j)}$, for $1 \le j \le m$, respectively. Moreover, if $\lambda_P = \kappa_P \otimes \tau$ is self-dual, then $\widetilde{\lambda}^{(j)} \circ \sigma_j \simeq \widetilde{\lambda}^{(j)}$, for $1 \le j \le m$.
\end{prop}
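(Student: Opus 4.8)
The plan is to combine the internal tensor-factor description from Proposition~4.4(3) with the general theory of simple types for $GL$ and for the group $G^{(0)}$, and then feed in the self-duality hypothesis through Proposition~5.2(2). First I would address the factor $\lambda^{(0)} = \kappa^{(0)} \otimes \tau_0$ on $J(\beta,\Lambda^{(0)})$. By Proposition~5.2(3), $\kappa^{(0)}$ is a $\beta$-extension of $\eta^{(0)}$, and by the defining conditions of a simple type (Definition~4.5(1)) the representation $\overline{\tau}_0$ is an irreducible representation of $\overline{G}^{(0)} = J(\beta,\Lambda^{(0)})/J^1(\beta,\Lambda^{(0)})$ that contains an irreducible cuspidal representation of the identity component $\overline{G}^\mathrm{o}$. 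This is precisely the data that makes $(J(\beta,\Lambda^{(0)}),\lambda^{(0)})$ a maximal simple type in the classical group $G^{(0)}$ in the sense of Stevens; here one invokes that $[\Lambda^{(0)},n,0,\beta]$ is a skew semisimple stratum with no nontrivial self-dual proper subordinate decomposition (which is what (2.1)/exact subordinacy forces on the $W^{(0)}$-block), so the associated type is ``maximal''.

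Next I would treat each factor $\widetilde{\lambda}^{(j)} = \widetilde{\kappa}^{(j)} \otimes \widetilde{\tau}^{(j)}$ on $\widetilde{J}(\beta',\Lambda^{(j)})$. By Proposition~5.2(3), $\widetilde{\kappa}^{(j)}$ is a $2\beta'$-extension of $\widetilde{\eta}^{(j)}$; since $\widetilde{J}^1(2\beta',\Lambda^{(j)}) = \widetilde{J}^1(\beta',\Lambda^{(j)})$ and $F[2\beta'] = F[\beta'] = E'$ (using $p \neq 2$), this is a $\beta'$-extension in the $GL$-sense for $\mathrm{Aut}_F(W^{(j)}) \simeq GL(N/m,F)$. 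By Definition~4.5(2), $\overline{\widetilde{\tau}}^{(j)}$ is an irreducible cuspidal representation of $\widetilde{P}(\Lambda^{(j)}_{\mathfrak{o}_{E'}})/\widetilde{P}_1(\Lambda^{(j)}_{\mathfrak{o}_{E'}}) \simeq GL(f,k_{E'})$. Since $\Lambda^{(j)}$ is (after the exact-subordinacy reduction) a strict lattice sequence whose $\mathfrak{o}_{E'}$-order $\mathfrak{b}_0$ is maximal, the pair $\bigl(\widetilde{J}(\beta',\Lambda^{(j)}), \widetilde{\lambda}^{(j)}\bigr)$ is exactly a maximal simple type for $GL(N/m,F)$ as constructed by Bushnell--Kutzko in \cite{BK1} — the three ingredients (simple stratum $[\Lambda^{(j)},\cdot,0,\beta']$, $\beta$-extension, cuspidal representation inflated from the reductive quotient) match their definition verbatim.

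Finally I would establish the self-duality statement. Assume $(J_P,\lambda_P)$ is self-dual, so by Definition~5.2, $\widetilde{\tau}^{(j)} \circ \sigma_j \simeq \widetilde{\tau}^{(j)}$ for each $j$. Combined with Proposition~5.2(2), namely $\widetilde{\kappa}^{(j)} \circ \sigma_j \simeq \widetilde{\kappa}^{(j)}$, and using that $\sigma_j$ is multiplicative and preserves the tensor decomposition $\widetilde{\lambda}^{(j)} = \widetilde{\kappa}^{(j)} \otimes \widetilde{\tau}^{(j)}$ (which it does because $s_j \in G^+_E$ normalizes $\widetilde{J}(\beta',\Lambda^{(j)})$ and carries the $\kappa$/$\tau$ data appropriately — this is implicit in how $\sigma_j$ was built from $s_j$), one gets
\begin{equation*}
\widetilde{\lambda}^{(j)} \circ \sigma_j \simeq (\widetilde{\kappa}^{(j)} \circ \sigma_j) \otimes (\widetilde{\tau}^{(j)} \circ \sigma_j) \simeq \widetilde{\kappa}^{(j)} \otimes \widetilde{\tau}^{(j)} = \widetilde{\lambda}^{(j)},
\end{equation*}
as required. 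The main obstacle I anticipate is the first part: verifying that $\lambda^{(0)}$ is a \emph{maximal} simple type in the classical group $G^{(0)}$ — one must check that exact subordinacy of $V = \bigoplus W^{(j)}$ forces the $W^{(0)}$-block stratum $[\Lambda^{(0)},n,0,\beta]$ to admit no further proper self-dual subordinate splitting, so that the Stevens maximality condition holds; this requires unwinding Definition~2.1(1) together with (2.1). The $GL$-factor assertion and the $\sigma_j$-invariance are comparatively routine once the right bookkeeping is in place, though care is needed that $\sigma_j$ genuinely intertwines the chosen $\beta$-extension $\widetilde{\kappa}^{(j)}$ with itself rather than with a twist — which is exactly the content of Proposition~5.2(2), so that hurdle is already cleared.
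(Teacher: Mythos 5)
Your proposal is correct and follows essentially the same route as the paper: the maximality of $\lambda^{(0)}$ and $\widetilde{\lambda}^{(j)}$ is deduced from the exact subordinacy in Definition 2.1 (the paper simply cites \cite[Proposition 6.3 and Definition 6.17]{St2} where you unpack the ingredients), and the self-duality statement is obtained, exactly as in the paper, by tensoring $\widetilde{\kappa}^{(j)} \circ \sigma_j \simeq \widetilde{\kappa}^{(j)}$ from Proposition 5.2(2) with $\widetilde{\tau}^{(j)} \circ \sigma_j \simeq \widetilde{\tau}^{(j)}$ from Definition 5.2.
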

\begin{proof}
Since $V = \bigoplus_{j=-m}^m W^{(j)}$ is exactly subordinate to $[\Lambda,n,0,\beta]$ in Definition 2.1, it follows from \cite[Proposition 6.3 and Definition 6.17]{St2} that $\lambda^{(0)}, \widetilde{\lambda}^{(j)}$ are maximal simple types. 

From Proposition 5.2, for $\widetilde{\lambda}^{(j)} = \widetilde{\kappa}^{(j)} \otimes \widetilde{\tau}^{(j)}$, we have $\widetilde{\kappa}^{(j)} \circ \sigma_j \simeq \widetilde{\kappa}^{(j)}$, for $1 \le j \le m$. If $\lambda_P$ is self-dual, by Definition 5.2, we have
\begin{eqnarray*}
\widetilde{\lambda}^{(j)} \circ \sigma_j &\simeq& (\widetilde{\kappa}^{(j)} \otimes \widetilde{\tau}^{(j)}) \circ \sigma_j\\
                                         &\simeq& (\widetilde{\kappa}^{(j)} \circ \sigma_j) \otimes (\widetilde{\tau}^{(j)} \circ \sigma_j)\\
                                         &\simeq& \widetilde{\kappa}^{(j)} \otimes \widetilde{\tau}^{(j)} = \widetilde{\lambda}^{(j)},
\end{eqnarray*}
for $1 \le j \le m$. The proof is complete.
\end{proof}

\section{$G$-covers}       

Let $[\Lambda,n,0,\beta]$ be a good skew semisimple stratum in $A$ with splitting $V = \bigoplus_{i=1}^{\ell+1} V_i,\ \beta = \sum_{i=1}^{\ell+1} \beta_i$, defined in section 4, and $(J_P,\lambda_P)$ a simple type in $G$ attached to $[\Lambda,n,0,\beta]$. Let $E_i = F[\beta_i]$, for $1 \le i \le \ell+1$, $E = \bigoplus_{i=1}^{\ell+1} E_i$, and $G_E$ be the $G$-centralizer of $\beta$.
We have $G_E = \prod_{i=1}^{\ell+1}G_{E_i}$ as in section 5. 

As in \cite[6.3]{St2}, let $T_{E_i}$ be the maximal split torus of $G_{E_i}$ which corresponds to the basis $\mathcal{B}^{(0)} \cap V^i$, for $1 \le i \le \ell$, and $T_{E_{\ell+1}}$ the one of $G_{E_{\ell+1}}$ corresponding to the basis $\bigl{(}\bigcup_{j=-m, j \ne 0}^m \mathcal{B}^{(j)}\bigl{)} \cup \bigl{(}\mathcal{B}^{(0)} \cap V^{\ell+1}\bigl{)}$ (see (2.1) and (2.2)).
Put $T_E = \prod_{i=1}^{\ell+1} T_{E_i}$, and let $N$ be the normalizer of $T_E$ in $G_E$. Put $N_{\Lambda} = \{w \in N \vert \text{$w$ normalizes $P^\mathrm{o}(\Lambda_{\mathfrak{o}_E}) \cap M$}\}$, as in \cite[6.3]{St2}. For the elements $s_j$, and $s_j^{\varpi}$, defined in section 5, put $\zeta_j = \varepsilon s_js_j^{\varpi}$, for $1 \le j \le m$. We may arrange the order of the basis elements in $\bigcup_{j=-m}^m\mathcal{B}^{(j)}$ so that the element $\zeta_j$ of $T_E$ has a diagonal block form:
\begin{equation*}
\displaystyle \zeta_j = \mathrm{Diag}(\underbrace{1_f,\cdots,1_f,\varpi_{E'}1_f}_{m+1-j},1_f,\cdots,1_f,1_{W^{(0)}},1_f,\cdots,1_f,\underbrace{\overline{\varpi}_{E'}^{-1}1_f,1_f,\cdots,1_f}_{m+1-j}),
\end{equation*}
where $1_f$ and $1_{W^{(0)}}$ denotes the identity matrix in $\mathrm{End}_{E'}(W^{(j)})$ and $\mathrm{End}_E(W^{(0)})$ respectively. Denote by $\Vec{D}_{\Lambda}$ the abelian subgroup of $N_{\Lambda}$ generated by $\zeta_j$, for $1 \le j \le m$. Then $\Vec{D}_{\Lambda}$ consists of elements
\begin{equation*}
\displaystyle \prod_{j=1}^m \zeta_j^{n_{m+1-j}} = \mathrm{Diag}(\varpi_{E'}^{n_1}1_f,\cdots,\varpi_{E'}^{n_m}1_f,1_{W^{(0)}},\overline{\varpi}_{E'}^{-n_m}1_f,\cdots,\overline{\varpi}_{E'}^{-n_1}1_f),
\end{equation*}
for $(n_1,\cdots,n_m) \in \mathbb{Z}^m$. Thus there exists an isomorphism $\mathbb{Z}^m \simeq \Vec{D}_{\Lambda}$. 

For the simple type $\lambda_P = \kappa_P \otimes \tau$ in $G$, let $\rho$ be the irreducible cuspidal component of $\tau \vert P^\mathrm{o}(\Lambda_{\mathfrak{o}_E})$ (cf. Definition 7.3), and set
\begin{equation*}
N_{\Lambda}(\rho) = \{w \in N_{\Lambda} \vert ^w\rho \simeq \rho \}.
\end{equation*}
Then clearly $\Vec{D}_{\Lambda} \subset N_{\Lambda}(\rho)$. Denote by $I_G(\lambda_P)$ the space of $G$-intertwiners of $\lambda_P$, that is, $I_G(\lambda_P) = \{g \in G \vert I_g(\lambda_P) \ne (0)\}$.

\begin{prop}        
Let $(J_P,\lambda_P)$ be a simple type in $G$ attached to a good skew semisimple stratum $[\Lambda,n,0,\beta]$ in $A$. Then we have $I_G(\lambda_P) \subset J_PN_{\Lambda}(\rho)J_P$.
\end{prop}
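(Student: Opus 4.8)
The plan is to reduce the intertwining of $\lambda_P$ to the intertwining of the underlying semisimple character $\theta$ together with the intertwining of the cuspidal datum $\tau$ on the finite reductive quotient, following the blueprint of Bushnell--Kutzko \cite{BK1} and its adaptation in \cite{St2}. First I would recall that $\lambda_P = \kappa_P \otimes \tau$ and $\lambda_P \mid J_P^1 = \eta_P$, where $\eta_P$ is the representation built from $\theta \in \mathcal{C}_-(\Lambda,0,\beta)$ via Proposition 3.1. Since $\kappa_P$ is a $\beta$-extension datum, intertwining of $\kappa_P$ is governed by the intertwining of $\eta$, and by the standard intertwining formula (as in \cite[\S 5.3]{St2}, transported from \cite[Proposition 5.3.2]{BK1}) one has $I_G(\eta) = J^1 G_E J^1$, and consequently, passing to the ``$P$-version'' using the Iwahori decompositions of Proposition 4.1, $I_G(\eta_P) \subset J_P^1 (G_E \cap \text{something}) J_P^1$; more precisely, an element $g$ intertwines $\lambda_P$ only if it intertwines $\eta_P$, which forces $g \in J_P^1 B^\times J_P^1$, i.e. after adjusting on both sides by $J_P$ we may assume $g \in G_E = \prod_{i=1}^{\ell+1} G_{E_i}$.

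Next, having reduced to $g \in G_E$, I would use that $g$ must also intertwine $\tau$ viewed through the quotient $J_P/J_P^1 = P(\Lambda_{\mathfrak{o}_E})/P_1(\Lambda_{\mathfrak{o}_E}) = \overline{G}^{(0)} \times GL(f,k_{E'})^m$. Here the key input is the affine Bruhat decomposition of $G_E$ relative to the hereditary order $\mathfrak{b}_0(\Lambda_{\mathfrak{o}_E})$: every element of $G_E$ can be written as $u_1 w u_2$ with $u_1, u_2 \in P^{\mathrm{o}}(\Lambda_{\mathfrak{o}_E})$ (or at least $P(\Lambda_{\mathfrak{o}_E})$) and $w \in N_\Lambda$ a representative of an extended affine Weyl group element normalizing $T_E$ and $P^{\mathrm{o}}(\Lambda_{\mathfrak{o}_E}) \cap M$. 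Absorbing the $P(\Lambda_{\mathfrak{o}_E})$-parts into $J_P$ (this is legitimate because $P(\Lambda_{\mathfrak{o}_E}) \subset J(\beta,\Lambda)$ and the $P$-construction is compatible with the Iwahori decomposition), we are reduced to $g = w \in N_\Lambda$.

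Finally, for $w \in N_\Lambda$ I would invoke Mackey theory on the finite reductive quotient: $w$ intertwines $\lambda_P$ if and only if $^w\rho \simeq \rho$ where $\rho$ is the cuspidal component of $\tau \mid P^{\mathrm{o}}(\Lambda_{\mathfrak{o}_E})$ — this is exactly the cuspidality input, since for a cuspidal representation of a finite reductive group the intertwining by a Weyl element vanishes unless that element normalizes the relevant parabolic and fixes the cuspidal datum up to isomorphism (the analogue of \cite[(6.2.2)]{BK1}, used in \cite[\S 6.3]{St2}). Hence $w \in N_\Lambda(\rho)$, giving $g \in J_P N_\Lambda(\rho) J_P$ as claimed. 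The main obstacle, and the step requiring the most care, is the first reduction: one must check that the passage from the intertwining of $\eta$ (a statement about the larger groups $J^1$, $J$) to the intertwining of $\eta_P$ and then $\lambda_P$ is clean, i.e. that the $(J^1 \cap U)$-fixed-vector construction does not destroy the intertwining bound and that the various Iwahori decompositions of Proposition 4.1 let one genuinely absorb the unipotent and Levi ``$P$-parts'' into $J_P$ on each side; this is where the hypothesis that $V = \bigoplus_j W^{(j)}$ is properly (indeed exactly) subordinate to $[\Lambda,n,0,\beta]$ is essential, and it is the analogue of the delicate intertwining computations in \cite[\S 5]{St2} that underlie \cite[Proposition 5.13]{St2}.
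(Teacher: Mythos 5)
Your overall strategy coincides with the paper's: both arguments first reduce to $g \in G_E$ via the intertwining of the Heisenberg representation (the paper quotes $I_G(\eta_P \vert J_P^1) = J_P^{\mathrm{o}}G_EJ_P^{\mathrm{o}}$, which is your first reduction), and both then use the cuspidality of $\rho$ to force the remaining element into $N_\Lambda(\rho)$. The difference in the second half is one of packaging: where you propose to run the affine Bruhat decomposition of $G_E$ and the Mackey/cuspidality argument by hand, the paper simply quotes \cite[Corollary 6.16]{St2}, which is precisely the statement that an element intertwining $(\kappa_P \vert J_P^{\mathrm{o}}) \otimes \rho$ lies in $J_P^{\mathrm{o}}N_\Lambda(\rho)J_P^{\mathrm{o}}$; your sketch amounts to a re-derivation of that corollary rather than a different route.

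The one substantive step you gloss over is the passage from $\tau$ to its cuspidal component $\rho$. Since $J_P/J_P^1 \simeq P(\Lambda_{\mathfrak{o}_E})/P_1(\Lambda_{\mathfrak{o}_E})$ need not be connected, $N_\Lambda(\rho)$ is defined in terms of $\rho$ living on the identity component $P^{\mathrm{o}}(\Lambda_{\mathfrak{o}_E})$, not in terms of $\tau$ itself; your closing assertion that ``$w$ intertwines $\lambda_P$ if and only if $^w\rho \simeq \rho$'' is therefore not yet justified. The paper supplies the missing link by Clifford theory: it decomposes $\lambda_P \vert J_P^{\mathrm{o}}$ as $k\sum_p \{(\kappa_P \vert J_P^{\mathrm{o}}) \otimes \rho\}^p$, with $p$ running over representatives of $P(\Lambda_{\mathfrak{o}_E})/N_{P(\Lambda_{\mathfrak{o}_E})}(\tau)$, and uses this to replace $g$ by $p_1gp_2$ with $p_1, p_2 \in P(\Lambda_{\mathfrak{o}_E}) \subset J_P$ so that the new element intertwines $(\kappa_P \vert J_P^{\mathrm{o}}) \otimes \rho$ itself. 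You need to insert this step (or an equivalent one) before invoking cuspidality; with it in place, your argument closes up and agrees with the paper's proof.
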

\begin{proof}
Suppose that $g$ intertwines $\lambda_P = \kappa_P \otimes \tau$. Then we may assume that $g \in G_E$, since $I_G(\eta_P \vert {J_P^1}) = J^\mathrm{o}_PG_EJ^\mathrm{o}_P$.
 In a similar way to the proof of \cite[Lemma 5.12]{St2}, by Clifford Theory, the restriction of $\lambda_P$ to $J_P^\mathrm{o}$ has the form
\begin{equation*}
\displaystyle \lambda_P \vert J^\mathrm{o}_P = k\sum_p \{(\kappa_P \vert J^\mathrm{o}_P) \otimes \rho \}^p,
\end{equation*}
where $k$ is the multiplicity and the sum is taken over a set of representatives $P(\Lambda_{\mathfrak{o}_E})/N_{P(\Lambda_{\mathfrak{o}_E})}(\tau)$.
Since $g$ intertwines $\lambda_P \vert J^\mathrm{o}_P$, there exist $p_1, p_2 \in P(\Lambda_{\mathfrak{o}_E})$ such that $p_1gp_2$ intertwines $(\kappa_P \vert J^\mathrm{o}_P) \otimes \rho$. Thus, since $P(\Lambda_{\mathfrak{o}_E}) \subset J_P$, we may assume that $g$ intertwines $(\kappa_P \vert J^\mathrm{o}_P) \otimes \rho$. 
Hence, from \cite[Corollary 6.16]{St2}, $g \in J^\mathrm{o}_PN_{\Lambda}(\rho)J^\mathrm{o}_P$, whence $g \in J_PN_{\Lambda}(\rho)J_P$. This completes the proof.
\end{proof}

Let $\pi$ be a smooth representation of $GL(N/m,F)$. Let $\pi^\vee$ be the contragradient representation of $\pi$, and define the representation $\pi^*$ of $GL(N/m,F)$ by
\begin{equation*}
\pi^*(g) = \pi^\vee(\overline{g})\ (g \in GL(N/m,F)).
\end{equation*}
A smooth representation $\pi$ of $GL(N/m,F)$ is called $F/F_0$-{\it selfdual}, if it satisfies $\pi^* \simeq \pi$, (cf. \cite{M, MT}).

Identifying the Levi subgroup $M$ with $G^{(0)} \times GL(N/m,F)^m$, set
\begin{equation*}
J_M = J_P \cap M,\ \lambda_M = \lambda_P \vert J_M.
\end{equation*}

\begin{prop}      
Let $(J_P,\lambda_P)$ be a simple type in $G$ attached to a good skew semisimple stratum $[\Lambda,n,0,\beta]$ in $A$ . Then there exist irreducible supercuspidal representations $\pi_{\rm{cusp}}$ of $G^{(0)}$ and $\widetilde{\pi}^{(1)},\cdots,\widetilde{\pi}^{(m)}$ of $GL(N/m,F)$ such that the representations $\widetilde{\pi}^{(1)},\cdots,\widetilde{\pi}^{(m)}$ form a single inertial equivalence class, and such that $(J_M,\lambda_M)$ is a $[\pi_M,M]_M$-type in $M$, where $\pi_M = \pi_{\rm{cusp}} \otimes \bigotimes_{j=1}^m \widetilde{\pi}^{(j)}$.
If $(J_P,\lambda_P)$ is self-dual, the representations $\widetilde{\pi}^{(1)},\cdots,\widetilde{\pi}^{(m)}$ are inertially equivalent to a single irreducible $F/F_0$-selfdual supercuspidal representation.
\end{prop}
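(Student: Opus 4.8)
The plan is to reduce the statement to the factor representations $\lambda^{(0)}$ and $\widetilde{\lambda}^{(j)}$ produced by Proposition 5.3, and then invoke the known theory of simple types for $GL$ and for $p$-adic classical groups. First I would recall, from Proposition 4.4 (3), that under the identifications $M \simeq G^{(0)} \times \prod_{j=1}^{m}\widetilde{G}^{(j)}$ and $J_M = J_P \cap M \simeq J(\beta,\Lambda^{(0)}) \times \prod_{j=1}^{m}\widetilde{J}(\beta',\Lambda^{(j)})$ we have $\lambda_M = \lambda_P\vert J_M \simeq \lambda^{(0)} \otimes \bigotimes_{j=1}^{m}\widetilde{\lambda}^{(j)}$, and that, by Proposition 5.3, $\lambda^{(0)}$ is a maximal simple type in $G^{(0)}$ while each $\widetilde{\lambda}^{(j)}$ is a maximal simple type in $\widetilde{G}^{(j)} \simeq GL(N/m,F)$. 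By the theory of simple types --- \cite{BK1, BK3} for the general linear factors and \cite{St2} (cf. \cite{Ka}) for the classical group factor --- a maximal simple type is a type for a single irreducible supercuspidal representation; thus there are irreducible supercuspidal representations $\widetilde{\pi}^{(j)}$ of $GL(N/m,F)$, for $1 \le j \le m$, and $\pi_{\mathrm{cusp}}$ of $G^{(0)}$, such that $\widetilde{\lambda}^{(j)}$ is a $[GL(N/m,F),\widetilde{\pi}^{(j)}]$-type and $\lambda^{(0)}$ is a $[G^{(0)},\pi_{\mathrm{cusp}}]$-type.

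Next, since $\widetilde{\lambda}^{(1)} \simeq \cdots \simeq \widetilde{\lambda}^{(m)}$ by Proposition 5.2 (4) and isomorphic types are types for one and the same inertial class, the representations $\widetilde{\pi}^{(1)},\cdots,\widetilde{\pi}^{(m)}$ all lie in a single inertial equivalence class (indeed they may be chosen mutually isomorphic). Putting $\pi_M = \pi_{\mathrm{cusp}} \otimes \bigotimes_{j=1}^{m}\widetilde{\pi}^{(j)}$, I would then conclude that $(J_M,\lambda_M)$ is a $[\pi_M,M]_M$-type in $M$ from the compatibility of types with direct products of groups: a tensor product of $\mathfrak{s}_i$-types for groups $M_i$ is an $\mathfrak{s}$-type for $M = \prod_i M_i$ with $\mathfrak{s} = \prod_i \mathfrak{s}_i$ (\cite{BK2}). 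This settles the first assertion.

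For the second assertion, assume $(J_P,\lambda_P)$ is self-dual; then by Proposition 5.3 we have $\widetilde{\lambda}^{(j)} \circ \sigma_j \simeq \widetilde{\lambda}^{(j)}$ for all $j$. The crucial step is to identify the involution $\sigma_j$ of $\widetilde{G}^{(j)}$, under the chosen isomorphism $\widetilde{G}^{(j)} \simeq GL(N/m,F)$, with the involution defining $\pi \mapsto \pi^{*}$: tracing the Weyl element $s_j$ of \cite[6.2]{St2} and the dual bases $\mathcal{B}^{(j)}$, $\mathcal{B}^{(-j)}$ of Section 5 through this isomorphism should show that $\sigma_j$ coincides, up to an inner automorphism of $GL(N/m,F)$, with $g \mapsto {}^{t}\overline{g}^{-1}$; combined with $\pi^{\vee} \simeq \pi \circ ({}^{t}(\,\cdot\,)^{-1})$ on $GL(N/m,F)$ this gives $\pi \circ \sigma_j \simeq \pi^{*}$ for every smooth representation $\pi$. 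Hence $\widetilde{\lambda}^{(j)} \circ \sigma_j$ is a $[GL(N/m,F),(\widetilde{\pi}^{(j)})^{*}]$-type, and $\widetilde{\lambda}^{(j)} \circ \sigma_j \simeq \widetilde{\lambda}^{(j)}$ forces the common inertial class of the $\widetilde{\pi}^{(j)}$ to be stable under $\pi \mapsto \pi^{*}$. Finally, as in \cite{Ka} (cf. \cite{M, MT}), a $\pi \mapsto \pi^{*}$-stable inertial class of irreducible supercuspidal representations of $GL(N/m,F)$ contains an $F/F_0$-selfdual member --- the involution $\pi \mapsto \pi^{*}$ acts on the complex line of unramified twists of $\widetilde{\pi}^{(j)}$ as $z \mapsto c z^{-1}$ for some $c \in \mathbb{C}^{\times}$, which has a fixed point --- so the $\widetilde{\pi}^{(j)}$ are inertially equivalent to a single irreducible $F/F_0$-selfdual supercuspidal representation, as required.

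I expect the main obstacle to be the identification of $\sigma_j$ with $g \mapsto {}^{t}\overline{g}^{-1}$ in the last paragraph, which demands careful bookkeeping of $s_j$ and of the compatibly chosen bases $\mathcal{B}^{(j)}$, $\mathcal{B}^{(-j)}$ through the isomorphism $\widetilde{G}^{(j)} \simeq GL(N/m,F)$, together with the verification that $\pi \mapsto \pi \circ \sigma_j$ preserves the line of unramified twists (so that the fixed-point argument applies). The remaining ingredients --- that a maximal simple type is a type for a single supercuspidal representation, that a tensor product of types is a type, and that a $\pi\mapsto\pi^{*}$-stable inertial class contains an $F/F_0$-selfdual representation --- are essentially standard, from \cite{BK1, BK2, BK3, St2, Ka}.
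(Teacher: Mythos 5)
Your proposal is correct and follows essentially the same route as the paper: reduce to the factors $\lambda^{(0)},\widetilde{\lambda}^{(j)}$ via Proposition 5.3, invoke the maximal simple type theory of \cite{BK1} and \cite{St2} to produce $\pi_{\mathrm{cusp}}$ and inertially equivalent $\widetilde{\pi}^{(j)}$, and in the self-dual case combine $\widetilde{\lambda}^{(j)}\circ\sigma_j\simeq\widetilde{\lambda}^{(j)}$ with the Gelfand--Kazhdan identification of $\pi\circ\sigma_j$ with $\pi^{*}$ and an unramified square-root twist (your fixed point of $z\mapsto cz^{-1}$ is exactly the paper's $\chi^{1/2}$). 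The only cosmetic difference is that the paper twists first to get a $\sigma_j$-fixed $\pi'=\widetilde{\pi}^{(j)}\chi^{1/2}$ and then applies Gelfand--Kazhdan, whereas you identify $\sigma_j$ with $g\mapsto{}^{t}\overline{g}^{-1}$ at the level of inertial classes before taking the fixed point.
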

\begin{proof}
From Proposition 5.3, $\lambda_M = \lambda^{(0)} \otimes \bigotimes_{j=1}^m \widetilde{\lambda}^{(j)}$, and $\lambda^{(0)}$ and the $\widetilde{\lambda}^{(j)}$'s are all maximal simple types. 
From \cite[Theorem 7.14]{St2} and \cite[(6.2.3)]{BK1}, there exist an irreducible supercuspidal representation $\pi_{\rm{cusp}}$ of $G^{(0)}$ containing $\lambda^{(0)}$ and an irreducible supercuspidal representation $\widetilde{\pi}^{(j)}$ of $GL(N/m,F)$ containing $\widetilde{\lambda}^{(j)}$, for $1 \le j \le m$.
Since $\widetilde{\lambda}^{(1)} \simeq \cdots \simeq \widetilde{\lambda}^{(m)}$, then, again from \cite[(6.2.3)]{BK1}, $\widetilde{\pi}^{(1)},\cdots,\widetilde{\pi}^{(m)}$ are mutually inertially equivalent.

Suppose that $(J_P,\lambda_P)$ is self-dual. Fix $j$, $1 \le j \le m$. Then, from Proposition 5.3, we have $\widetilde{\lambda}^{(j)} \circ \sigma_j \simeq \widetilde{\lambda}^{(j)}$.
Thus $\widetilde{\pi}^{(j)} \circ \sigma_j$ contains $\widetilde{\lambda}^{(j)}$, and so there exists an unramified character $\chi$ of $GL(N/m,F)$ such that $\widetilde{\pi}^{(j)} \circ \sigma_j \simeq \widetilde{\pi}^{(j)}\chi$. Define a representation $\pi'$ of $GL(N/m,F)$ by $\pi' = \widetilde{\pi}^{(j)}\chi^{1/2}$.
Then we have
\begin{eqnarray*}
\pi' \circ \sigma_j = (\widetilde{\pi}^{(j)}\chi^{1/2}) \circ \sigma_j &=& (\widetilde{\pi}^{(j)} \circ \sigma_j)(\chi^{1/2} \circ \sigma_j)\\ 
                                                      &\simeq& (\widetilde{\pi}^{(j)}\chi)\chi^{-1/2} = \pi'.
\end{eqnarray*}
Employing a theorem of Gelfand and Kazhdan \cite[Theorem 2]{GK}, we have, for $g \in G$,
\begin{equation*}
\pi' \circ \sigma_j(g) = \pi'(^t\overline{g}^{-1}) \simeq (\pi')^\vee(\overline{g}) = (\pi')^*(g).
\end{equation*}
Hence $(\pi')^* \simeq \pi' \circ \sigma_j \simeq \pi'$, that is, $\pi' = \widetilde{\pi}^{(j)}\chi$ is inertially equivalent to $\widetilde{\pi}^{(j)}$, and is $F/F_0$-selfdual. This holds for any $j$, $1 \le j \le m$, whence the proof is completed.
\end{proof}

For the parabolic subgroup $P = MU$, let $P^-$ be the opposite parabolic subgroup of $P$, and $U^-$ the unipotent radical of $P^-$ with $P^- = MU^-$. The pair $(J_M,\lambda_M)$ satisfies the following conditions:
\begin{enumerate}
  \item $(J_P,\lambda_P)$ is a decomposed pair with respect to $(M,P)$, that is, 
  \begin{equation*}
  J_P = (J_P \cap U^-)J_M(J_P \cap U),
  \end{equation*}
  by Proposition 4.1, and $\lambda_P$ is trivial on $J_P \cap U$ and $J_P \cap U^-$,
  
  \item $\lambda_M = \lambda_P \vert J_P \cap M$.
\end{enumerate}

\begin{thm}        
Let $(J_P,\lambda_P)$ be a simple type in $G$ attached to a good skew semisimple stratum $[\Lambda,n,0,\beta]$ in $A$, and $\pi_M$ the irreducible supercuspidal representation of $M$ corresponding to $(J_M,\lambda_M)$ as in Proposition 6.2. Then $(J_P,\lambda_P)$ is a $G$-cover of $(J_M,\lambda_M)$, and so it is an $[M,\pi_M]_G$-type in $G$.
\end{thm}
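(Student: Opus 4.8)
The plan is to verify the criterion of Bushnell--Kutzko \cite{BK2} for $(J_P,\lambda_P)$ to be a $G$-cover of $(J_M,\lambda_M)$. Recall that this amounts to three points: that $(J_P,\lambda_P)$ be a decomposed pair with respect to $(M,Q)$ for every parabolic subgroup $Q$ of $G$ with Levi component $M$; that $(J_M,\lambda_M)$ be a type in $M$; and that for each such $Q=MU_Q$ there exist an element $\zeta_Q$ of $Z(M)$ which is strongly $(Q,J_M)$-positive together with an invertible element of the Hecke algebra $\mathcal{H}(G,\lambda_P)$ supported on the double coset $J_P\zeta_QJ_P$. The first point for $(M,P)$ is Proposition 4.1 and the remarks preceding the statement, and for a general $Q$ it follows in the same way from \cite[Corollary 5.10]{St2}, since $V=\bigoplus_{j=-m}^m W^{(j)}$ is properly subordinate to $[\Lambda,n,0,\beta]$; the second point is Proposition 6.2. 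Thus the work is concentrated in the third point, and by the standard reduction it suffices to treat $P$ and its opposite $P^-$.

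Next I would locate the positive central element. The natural candidates come from the elements $\zeta_j$ of Section 6: each $\zeta_j$ acts by the scalar $\varpi_{E'}$ on $W^{(j)}$, by $\overline{\varpi}_{E'}^{-1}$ on $W^{(-j)}$, and trivially on the other blocks, hence lies in $Z(M)$ under the isomorphism $M\simeq G^{(0)}\times\prod_{j=1}^m\widetilde{G}^{(j)}$. Ordering the blocks so that $U$ is block upper triangular, the element $\zeta=\prod_{j=1}^m\zeta_j^{m+1-j}\in\Vec{D}_{\Lambda}$ has diagonal blocks with strictly decreasing valuations $m>m-1>\cdots>-m$; from this one reads off directly that conjugation by $\zeta$ strictly contracts $J_P\cap U$, with $\bigcap_{r\ge0}\zeta^r(J_P\cap U)\zeta^{-r}=\{1\}$, while $\bigcup_{r\ge0}\zeta^{-r}(J_P\cap U^-)\zeta^r=U^-$, so $\zeta$ is strongly $(P,J_M)$-positive and $\zeta^{-1}$ is strongly $(P^-,J_M)$-positive. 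Crucially $\zeta^{\pm1}\in\Vec{D}_{\Lambda}\subset N_{\Lambda}(\rho)$.

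Finally I would produce the invertible Hecke operator. By Proposition 6.1 the algebra $\mathcal{H}(G,\lambda_P)$ is supported on $J_PN_{\Lambda}(\rho)J_P$, so $J_P\zeta J_P$ carries a nonzero element $\Phi$; since $\zeta$ is central in $M$ and normalises $J_M$ while preserving the isomorphism class of $\lambda_M$, one uses the decomposition $\lambda_P\vert J_P\cap M=\lambda^{(0)}\otimes\bigotimes_{j=1}^m\widetilde{\lambda}^{(j)}$ to reduce the computation of the $\zeta$-intertwining of $\lambda_P$ to the general linear factors, where it is one-dimensional by \cite{BK1}, so the space of such $\Phi$ is a line. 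Taking also a nonzero $\Phi^-$ supported on $J_P\zeta^{-1}J_P$, the strong positivity of $\zeta$ forces $\Phi*\Phi^-$ to have nonzero coefficient on $J_P$, and the standard support argument (as in \cite[\S7]{BK1}) shows that after normalisation $\Phi$ and $\Phi^-$ are mutually inverse in $\mathcal{H}(G,\lambda_P)$; the same works for $P^-$. I expect this last step to be the main obstacle, as it is precisely where the fine structure of the maximal simple types on the $\widetilde{G}^{(j)}$ and the positivity of $\zeta$ have to be combined. Granting the three points, \cite{BK2} yields that $(J_P,\lambda_P)$ is a $G$-cover of $(J_M,\lambda_M)$; and since by Proposition 6.2 the representation $\pi_M=\pi_{\mathrm{cusp}}\otimes\bigotimes_{j=1}^m\widetilde{\pi}^{(j)}$ is the irreducible supercuspidal representation of $M$ for which $(J_M,\lambda_M)$ is an $[M,\pi_M]_M$-type, and a $G$-cover of an $[M,\pi_M]_M$-type is an $[M,\pi_M]_G$-type \cite{BK2}, it follows that $(J_P,\lambda_P)$ is an $[M,\pi_M]_G$-type in $G$.
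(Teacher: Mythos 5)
Your strategy is the same as the paper's at every point except the last one: both arguments invoke the Bushnell--Kutzko criterion \cite[(8.3)]{BK2}, both use Proposition 4.1 for the decomposed-pair condition and Proposition 6.2 for the fact that $(J_M,\lambda_M)$ is a type in $M$, and both take as strongly positive central element essentially the same product of the $\zeta_j$ (the paper's $z_P=\prod_{j=1}^m\zeta_j^{(m-j+1)e(E'\vert F)}$ versus your $\prod_{j=1}^m\zeta_j^{m+1-j}$; the extra exponent $e(E'\vert F)$ is a normalisation you should carry along, but it does not change the shape of the argument). The divergence is in how the invertible element of $\mathcal{H}(G,\lambda_P)$ supported on $J_Pz_PJ_P$ is produced. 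The paper observes that, since $z_P\in\Vec{D}_{\Lambda}\subset N_{\Lambda}(\rho)$ is central in $M$ and normalises $(J_M,\lambda_M)$, there is an invertible $\xi'\in\mathcal{H}(M,\lambda_M)$ supported on the single coset $J_Mz_P$, and then transports it to $\mathcal{H}(G,\lambda_P)$ by the support-preserving homomorphism $j_P$ of \cite{BK2}. You instead build $\Phi$ and $\Phi^-$ directly in $\mathcal{H}(G,\lambda_P)$, supported on $J_P\zeta J_P$ and $J_P\zeta^{-1}J_P$, and argue by convolution.

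In your version the decisive step is exactly the one you flag and do not carry out, and as stated it is incomplete: knowing that $\Phi*\Phi^-$ has nonzero coefficient at the identity does not make it a unit; you must control its entire support. By strong positivity and the Iwahori decomposition one has $J_P\zeta J_P\zeta^{-1}J_P=J_P\,\zeta(J_P\cap U^-)\zeta^{-1}\,J_P$, so what is actually required is that no element of $U^-$ outside $J_P$ intertwines $\lambda_P$; this is where Proposition 6.1 (namely $I_G(\lambda_P)\subset J_PN_{\Lambda}(\rho)J_P$) has to be combined with a Cartan/Bruhat-type uniqueness statement for the cosets $J_PwJ_P$, $w\in N_{\Lambda}(\rho)$, in the spirit of \cite[\S 7.2]{BK1} and \cite{Bl1, GKS}. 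Your one-dimensionality claim for the $\zeta$-intertwining is fine (it reduces to the irreducibility of $\lambda_M$), but without the support computation the mutual invertibility of $\Phi$ and $\Phi^-$ is not established. The paper's route through $\xi'$ and $j_P$ is designed precisely to sidestep this convolution computation by delegating it to \cite{BK2} and the Hecke-algebra results of \cite{St2}; if you prefer your direct approach, you need to supply the statement $U^-\cap J_PN_{\Lambda}(\rho)J_P=U^-\cap J_P$ (and its analogue for $U$) as an explicit lemma.
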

\begin{proof}
By \cite[(8.3)]{BK2}, it is necessary to show that there exists an invertible element $\xi$ of $\mathcal{H}(G,\lambda_P)$ which is supported on $J_Pz_PJ_P$, where $z_P$ is an element of the center of $M$ and is strongly $(P,J_P)$-positive (cf. \cite[(6.16)]{BK2}). Define an element $z_P$ of the center of $M$ by
\begin{equation*}
\displaystyle z_P = \prod_{j=1}^m \zeta_j^{(m-j+1)e(E' \vert F)} \in \Vec{D}_{\Lambda}^-.
\end{equation*}
Then, from Proposition 6.3, there exists an element $\xi' \in \mathcal{H}(M,\lambda_M)$ supported on $J_Mz_P$, and thus $\xi = j_P(\xi') \in \mathcal{H}(G,\lambda_P)$ is the desired element. The proof is complete.
\end{proof}

\renewcommand{\labelenumi}{\theenumi}

 \vspace{4mm}

Kazutoshi Kariyama

 Department of Economics, Management

and Information Science,

Onomichi University,

Onomichi 722-8506, Japan

e-mail: kariyama@onomichi-u.ac.jp

\vspace{3mm}
Michitaka Miyauchi

Graduate school of Mathematics,

Kyoto University, Kitashirakawa,

Kyoto, 606-8502, Japan

e-mail: miyauchi@math.kyoto-u.ac.jp

\end{document}